\theoremstyle{definition}
\newtheorem{mydef}{Definition}[section]
\theoremstyle{remark}
\newtheorem{mybem}[mydef]{Remark}
\theoremstyle{plain}
\newtheorem{mysen}[mydef]{Theorem}
\newtheorem{mylem}[mydef]{Lemma}
\newtheorem{mypro}[mydef]{Proposition}
\newtheorem{myfact}[mydef]{Fact}
\newtheorem{myclan}[mydef]{Claim}
\newtheorem{mysclai}[mydef]{Subclaim}
\newtheorem{myquest}[mydef]{Question}
\numberwithin{mydef}{section}
\DeclareMathOperator{\dom}{dom}
\DeclareMathOperator{\crit}{crit}
\DeclareMathOperator{\otp}{otp}
\DeclareMathOperator{\Add}{Add}
\DeclareMathOperator{\Odd}{Odd}
\DeclareMathOperator{\Even}{Even}
\DeclareMathOperator{\op}{op}
\DeclareMathOperator{\ICNIA}{\textup{\textsf{ICNIA}}}
\newcommand{\dA}{\mathbb{A}}
\newcommand{\dI}{\mathbb{I}}
\newcommand{\dM}{\mathbb{M}}
\newcommand{\dP}{\mathbb{P}}
\newcommand{\dQ}{\mathbb{Q}}
\newcommand{\dT}{\mathbb{T}}
\newcommand{\uhr}{\upharpoonright}
\newcommand{\seq}[2]{\langle #1 : #2 \rangle}
\newcommand{\COM}{\textsf{\textup{COM}}}
\newcommand{\INC}{\textsf{\textup{INC}}}
\title[Distinguishing Int. Club and Appr. on an Infinite Interval]{Distinguishing Internally Club and Approachable on an Infinite Interval} %TODO
\author{Hannes Jakob and Maxwell Levine} %TODO
\subjclass[2020]{} %TODO
\date{\today}
\begin{document}
	
	%For IMPAN again
	
%	\baselineskip=17pt
	\keywords{} %TODO
	
	%End

	\begin{abstract} Krueger showed that $\textup{\textsf{PFA}}$ implies that for all regular $\Theta \ge \aleph_2$, there are stationarily many $[H(\Theta)]^{\aleph_1}$ that are internally club but not internally approachable. From countably many Mahlo cardinals, we force a model in which, for all positive $n<\omega$ and $\Theta \ge \aleph_{n+1}$, there is a stationary subset of $[H(\Theta)]^{\aleph_n}$ consisting of sets that are internally club but not internally approachable. The theorem is obtained using a new variant of Mitchell forcing. This answers questions of Krueger.
	\end{abstract}
	
	\maketitle
	
	\section{Introduction}

Following work of Foreman and Todor{\v c}evi{\' c} \cite{Foreman-Todorcevic2005}, Krueger wrote a series of papers exploring variations of internal approachability, in particular proving that the variations are distinct \cite{Krueger2007}. He showed that these distinctions can be obtained using mixed-support iterations, which resemble the forcings Mitchell used to obtain the tree property at double successor cardinals. Notable developments in the study of the tree property pertain to obtaining the tree property simultaneously on long intervals of cardinals, and this area of research requires analyses of variants of Mitchell's forcing. In this spirit, Kruger raised the question of whether these properties could be separated for successive cardinals, or even an infinite sequence of cardinals \cite{Krueger2009}. We studied the case in which internally stationary is distinguished from internally club by using forcings that accomplish the work of mixed support iterations while more explicitly resembling Mitchell's forcing \cite{Levine2023a,Jakob2023}.

In this paper we will demonstrate the robustness of this idea by addressing the separation of internally club from internally approachable. We introduce a new version of Mitchell forcing, for which we must consider somewhat elaborate termspaces. The benefit is derived from having an Abraham-style projection analysis. We hope that this concept will be useful for points in the literature where mixed support iterations are called for (see \cite{Fuchino-Rodrigues2018}, for example).

%\marginpar{\tiny M: At least the forcing used in this manuscript should have identifiable differences with the ``Eighfold Way'' forcing that forces failure of approachability (I'm not sure how much this is true for our DSS papers). It would probably be good to mention them in the introduction}

The concepts we study here are framed in terms of the notion of stationarity for spaces of the form $[X]^{\le \mu}$, which was formulated by Jech (see \cite{Jech2003}). We say that some $N \in [X]^\mu$ is:

\begin{itemize}
\item \emph{internally unbounded} if $[N]^{<\mu}\cap N$ is unbounded in $[N]^{<\mu}$,
\item \emph{internally stationary} if $[N]^{<\mu} \cap N$ is stationary in $[N]^{<\mu}$,
\item \emph{internally club} if $[N]^{<\mu} \cap N$ contains a club in $[N]^{<\mu}$,
\item \emph{internally approachable} if there is a continuous sequence $\seq{N_i}{i<\mu}$ consisting of elements of $[N]^{<\mu}$ such that for all $i<\mu$, $\seq{N_j}{j \le i} \in N$ and $N = \bigcup_{i<\mu}N_i$.
\end{itemize}

For clarity, let $\ICNIA(\Theta,\mu)$ be the statement that $\Theta \ge \mu^+$ and that there exist stationarily many $N\in[H(\Theta)]^{\leq\mu}$ which are internally club but not internally approachable. Since the assumption that $\mu$ is regular is standard for stationary subsets of $[H(\Theta)]^{\leq \mu}$, this distinction does not make sense if $\mu$ is singular. Furthermore, it cannot hold if $\mu$ is inaccessible, so in all cases we are assuming that $\mu$ is a double successor. Krueger showed that $\textup{\textsf{PFA}}$ implies $\ICNIA(\Theta,\aleph_1)$ for all $\Theta \ge \aleph_2$ \cite{Krueger2007} and later showed that $\ICNIA(\mu^+,\mu)$ is consistent from a Mahlo cardinal for regular $\mu$ \cite{Krueger2009}. We extend that result here:

\begin{mysen}\label{omega-theorem} Assume there are countably many Mahlo cardinals in $V$. Then there is a forcing extension in which, for all $1 \le n < \omega$, $\ICNIA(\Theta,\aleph_n)$ holds for all $\Theta \ge \aleph_{n+1}$.\end{mysen}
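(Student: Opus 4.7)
The plan is to iterate Krueger's single-Mahlo construction along the whole sequence of Mahlo cardinals, using the new Mitchell variant advertised in the abstract so that the single-level analysis survives being embedded into an infinite product. Let $\kappa_0 < \kappa_1 < \cdots$ enumerate the Mahlo cardinals of $V$. The target is a forcing of the form $\mathbb{P} \cong \prod_{n<\omega} \mathbb{M}_n$, with a carefully chosen support convention, such that in $V^{\mathbb{P}}$ we have $\kappa_n = \aleph_{n+1}$ and, for each $n \ge 1$, the generic of $\mathbb{M}_n$ delivers the $\ICNIA(\Theta,\aleph_n)$ witnesses for every $\Theta \ge \aleph_{n+1}$.

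The local building block $\mathbb{M}_n$ is a Mitchell-style forcing with target cardinal $\mu_n := \aleph_n$ that collapses $\kappa_n$ to $\mu_n^+$. Its conditions combine a Cohen-style coordinate on $\kappa_n$ with an elaborate termspace carrying the collapsing data, designed so that $\mathbb{M}_n$ admits an Abraham-style factorization as a $\mu_n$-closed forcing times a $\mu_n^+$-cc forcing. The closed factor supplies, for each large enough $\Theta$, a generic continuous $\subseteq$-increasing chain $\seq{N_\alpha}{\alpha<\mu_n}$ of small models whose union is an internally club $N \in [H(\Theta)]^{\mu_n}$, while the cc factor prevents $N$ from being internally approachable: an approachable enumeration of $N$ would, by the cc, be captured by a small initial segment of the generic, which can be precluded by density. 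This is Krueger's one-cardinal scheme, but with the termspace redesigned so that it commutes with the analogous structure of every other $\mathbb{M}_m$.

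Assembling the $\mathbb{M}_n$ into $\mathbb{P}$, I would pick supports so that at each $n$ the tail $\mathbb{P}_{>n} := \prod_{m>n}\mathbb{M}_m$ is $\mu_n^+$-closed in $V^{\mathbb{P}_{\le n}}$, while the head $\mathbb{P}_{\le n}$ has size at most $\kappa_n$ and is $\kappa_n$-cc. Together with the Abraham-style decomposition of each factor this preserves cardinals on the scheme $\kappa_n \mapsto \aleph_{n+1}$ and ensures that no new $\mu_n$-sequences relevant to level-$n$ witnesses are added by the higher forcings, so that the witnesses constructed at stage $n$ survive into $V^{\mathbb{P}}$. Reflection off the Mahloness of $\kappa_n$, applied inside $V^{\mathbb{P}_{\le n-1}}$ and pushed forward, upgrades ``there exists'' to ``stationarily many'' $N \in [H(\Theta)]^{\mu_n}$ for every $\Theta \ge \aleph_{n+1}$.

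The main obstacle, and the reason for the new variant of Mitchell forcing, is the infinite-product coherence of these Abraham-style projections. For a single Mitchell forcing the termspace factorization is standard, but in a product the projections at different levels need not be mutually absolute; in particular, the $\mu_n^+$-cc of the head and the $\mu_n^+$-closure of the tail must both hold after forcing with pieces of the other. The elaborate termspace of $\mathbb{M}_n$ is built precisely to make these projections survive, so that a uniform ``closed $\times$ cc'' decomposition of each $\mathbb{M}_n$ is available in the appropriate intermediate model and, crucially, the witnesses for $\ICNIA(\Theta,\aleph_n)$ are not destroyed by the higher $\mathbb{M}_m$'s nor by the far-above behaviour at arbitrarily large $\Theta$. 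Establishing this mutual compatibility is the heart of the argument; once it is in place, Krueger's single-cardinal analysis can be invoked at each $n$ independently, yielding the theorem.
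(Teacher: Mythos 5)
Your global architecture---Mitchell-style blocks assembled along the Mahlo cardinals with an Abraham-style projection analysis, one level delivering the witnesses for each $n$---is the right shape, and a product version is in fact viable (the paper remarks that one exists). But two of your specific claims fail. First, no support convention makes the tail $\mathbb{P}_{>n}$ $\mu_n^+$-closed over $V^{\mathbb{P}_{\le n}}$: the very next factor $\mathbb{M}_{n+1}$ has Cohen coordinate $\Add(\mu_n,\kappa_{n+1})$, which adds new subsets of $\mu_n$ and so is not even $\mu_n^+$-distributive; moreover the collapsing coordinates are only strongly $\mu$-\emph{strategically} closed in the term ordering (Lemma \ref{StratClosed})---true closure is unavailable, since $\dP(S)$ destroys stationarity. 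The paper's proof is organized around exactly this obstruction: the next level's Cohen part is split off and grouped \emph{with} the level-$n$ forcing, which is why Lemma \ref{Extension} is proved for the product $\dM^\oplus(\tau,\mu,\kappa,W)\times\Add(\mu,\gamma)$ rather than for $\dM^\oplus$ alone; everything else goes into a ${<}\,\kappa_n$-strategically closed term forcing $\dT_\textup{high}^n$, and the conclusion is then pulled \emph{down} from the outer product model through that distributive factor. For that descent, plain $\ICNIA$ does not obviously go downwards; one needs the strengthening $\ICNIA^+$ (ordinal-internal approachability, Definition \ref{plus-version}) together with Proposition \ref{ClubApprDown}. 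Your proposal has no analogue of the extra Cohen factor, of $\ICNIA^+$, or of the downward-transfer step, and its ``witnesses survive because the tail is closed'' claim is precisely what cannot be arranged.

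Second, the engine preventing internal approachability is misidentified. It is not that an approachable enumeration ``would be captured by a small initial segment of the generic, precluded by density''; it is the ${<}\,\mu$-approximation property of the quotient (Lemmas \ref{ApproxProp} and \ref{quotient-approx}): if $N$ were internally approachable, every proper initial segment of the approaching sequence would lie in the intermediate model $V[G']$, the approximation property would place the entire sequence in $V[G']$, and then $\nu$ would have cardinality $\mu$ there, contradicting the $\nu$-Knasterness of the small part. Proving that approximation property requires the iteration-like mixing argument of Claim \ref{DecisionByP} and the strategic closure of the termspace; your cc-plus-density sketch does not close this gap. Finally, the uniformity in $\Theta$ is not a routine reflection off Mahloness: the paper obtains stationarity for \emph{all} regular $\Theta \ge \kappa$ by choosing a rich model $M \prec H(\Theta')$ and exploiting that the Mostowski collapse of $M \cap H(\Theta)$ cannot distinguish between large $\Theta$'s, whereas your proposal asserts survival ``at arbitrarily large $\Theta$'' without a mechanism.
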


This resolves a case of \cite[Question 12.9]{Krueger2009}, where the projection analysis allows us to obtain consecutive instances of $\ICNIA(\Theta,\aleph_n)$. It also resolves a case of \cite[Question 12.7]{Krueger2009}, where the idea of the solution is more or less that the image of the Mostowski collapse cannot distinguish between $H(\Theta)$'s for large $\Theta$.

	\section{The New Forcing}
	
	In this section we will define the new forcing and present a simple application before moving on to the proof of our main theorem.
	
	\subsection{Defining the Forcing}
	
	The idea of our forcing is to take the two-step iteration used to establish $\ICNIA(\Theta,\aleph_1)$ and build it into a variant of Mitchell forcing that enjoys some of the nice properties of the more standard variants. First, we need the collapse that Krueger used, which forces a chain through a stationary set.
	
	\begin{mydef}[see \cite{Krueger2007}]\label{CollapseClub}
		Let $\mu\leq\delta$ be cardinals and $S\subseteq[X]^{<\mu}$ be stationary for some set $X$. $\dP(S)$ consists of closed sequences of length $<\mu$ through $S$, i.e$.$ it consists of sequences $s$ such that $\dom(s)$ is a successor ordinal below $\mu$, $s(\alpha)\in S$ for all $\alpha\in\dom(s)$, and $s(\gamma)=\bigcup_{\alpha<\gamma}s(\alpha)$ for all limit $\gamma\in\dom(s)$. 
	\end{mydef}
	
The poset $\dP(S)$ is used because it allows us to collapse $\delta$ while retaining both the approximation property and the clubness of the ``old'' sets.
	
	\begin{myfact}
		Let $\mu\leq\delta$ be cardinals and $S\subseteq[H(\delta)]^{<\mu}$ stationary.
		\begin{enumerate}
			\item $\dP(S)$ adds an increasing and continuous sequence of elements $\seq{S_i}{i<\mu}$ of $S$ with union $H(\delta)^V$, thus collapsing $\delta$ to have cardinality $\mu$.
			\item If $\delta^{<\delta}=\delta$, then $\dP(S)$ has cardinality $\delta$ and therefore is $\delta^+$-cc.
		\end{enumerate}
	\end{myfact}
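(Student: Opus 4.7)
The plan is to read the generic chain directly off the generic filter, so that the two assertions reduce to a density argument and a cardinality count respectively. Let $G \subseteq \dP(S)$ be $V$-generic. By compatibility of conditions in $G$, we can set $S_\alpha := s(\alpha)$ for any (equivalently, every) $s \in G$ with $\alpha \in \dom(s)$. Continuity of $\seq{S_\alpha}{\alpha<\mu}$ at limits is built directly into Definition \ref{CollapseClub}, and increasingness follows from the order on $\dP(S)$.

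The main density claim I would prove is this: for any $s \in \dP(S)$, any $\beta < \mu$, and any $x \in H(\delta)^V$, there is $s' \leq s$ with $\dom(s')$ a successor ordinal exceeding $\beta$ and $x$ contained in some value $s'(\gamma)$. To construct $s'$, I would recursively extend the sequence $\seq{s(\alpha)}{\alpha \leq \alpha_0}$ (where $\alpha_0+1 = \dom(s)$): at successor stages, use unboundedness of $S$ in $[H(\delta)]^{<\mu}$ to choose $T_\gamma \in S$ extending the previous value, inserting $x$ at the first such step past $\alpha_0$; at limit stages $\gamma \leq \beta+1$, the value is forced to be $\bigcup_{\gamma' < \gamma} T_{\gamma'}$, and must itself land in $S$. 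Given this density, genericity yields $\bigcup_{\alpha<\mu} S_\alpha = H(\delta)^V$ and that the chain has length $\mu$, collapsing $\delta$ to cardinality $\mu$.

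Part (2) is a direct count. Under $\delta^{<\delta} = \delta$, one has $|S| \leq |[H(\delta)]^{<\mu}| \leq \delta^{<\mu} \leq \delta$, and every condition is a sequence of length $<\mu$ from $S$, so $|\dP(S)| \leq \delta^{<\mu} \leq \delta$. The $\delta^+$-cc is then immediate, as any antichain is a subset of a set of cardinality $\delta$.

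The only genuinely delicate point is the limit step in the density argument, where the continuous union of previously chosen members of $S$ must itself be in $S$ in order for the recursion to extend. This is where stationarity of $S$ (rather than mere unboundedness) enters: by choosing the successor-stage values carefully—e.g., inside a large enough elementary submodel or along a guiding Skolem hull construction—one can arrange that the required limits hit $S$. I would expect this to be the step the authors take for granted based on Krueger's treatment, and the rest of the fact to follow routinely.
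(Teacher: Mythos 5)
The paper states this Fact without proof, so the comparison is against the intended (routine) argument. Your part (2) is correct and is the standard count: $|H(\delta)|=2^{<\delta}=\delta$ under $\delta^{<\delta}=\delta$, hence $|\dP(S)|\leq\delta^{<\mu}\leq\delta$, and the $\delta^+$-cc is immediate.

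For part (1), however, the step you yourself flag as ``genuinely delicate'' is a real gap, and moreover it is the wrong difficulty to be fighting. Look again at the definition of $\dP(S)$: conditions are merely continuous sequences through $S$ of successor length, ordered by end-extension; nothing forces a condition to grow strictly at successor steps. So to extend a condition $s$ past any $\beta<\mu$, first take one genuine step, choosing $T\in S$ with $T\supseteq\bigcup\operatorname{ran}(s)\cup\{x\}$ (this uses only that $S$ is unbounded, since the cone above a fixed ${<}\,\mu$-sized set is club and $S$ is stationary), and then \emph{repeat} the value $T$ up to length $\beta+1$. At every limit $\gamma$ of the extended sequence the required union is exactly $T\in S$, so the extension is a condition. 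Thus the dense sets ``length ${>}\,\beta$'' and ``$x$ appears in some value'' are dense for trivial reasons, and the generic sequence has length $\mu$, runs through $S$, and has union $H(\delta)^V$, giving the collapse. No elementary submodels and no appeal to stationarity beyond unboundedness are needed anywhere.

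By contrast, the repair you sketch --- arranging, via Skolem hulls, that the limits of a genuinely growing chain land in $S$ --- is not available in this generality. For $\mu=\omega_1$ it is Krueger's classical catch-your-tail argument, but for $\mu>\omega_1$ one must pass limits of uncountable cofinality, where the hull construction would itself need a continuous chain of traces lying in $S$; an arbitrary stationary $S\subseteq[H(\delta)]^{<\mu}$ need not admit this (this is precisely the phenomenon behind the paper's remark that ${<}\,\mu$-distributivity of $\dP(S)$ requires additional assumptions on $S$, and behind the careful check-name bookkeeping in the proof of Lemma \ref{StratClosed}). So as written your recursion would stall at such limits, while the stalling trick above shows the Fact needs no such machinery. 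One further small caution: the Fact's phrase ``collapsing $\delta$ to have cardinality $\mu$'' should be read as $|\delta|=|\mu|$ in the extension; preservation of $\mu$ itself is not part of this Fact and indeed fails for some stationary $S$.
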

	
	Note that to obtain ${<}\,\mu$-distributivity of $\dP(S)$, $S$ needs to satisfy some additional assumptions.
	
	Now we are ready to define our Mitchell forcing. We need to take some care regarding the model in which the Cohen sets are defined. This is analogous to constructions in which the tree property holds on an interval of cardinals (see \cite{Cummings-Foreman1998}), and is done here in anticipation of the iteration used to prove Theorem~\ref{omega-theorem}. We will therefore use the following basic fact from here on without comment:
	
	\begin{myfact} Suppose that $W \subseteq V$ are models of set theory such that $\tau$ is regular and $\kappa$ is inaccessible in in $W$. Suppose also that $\tau$ and $\kappa$ are cardinals in $V$ and that the extension $W \subseteq V$ has the $\kappa$-covering property. Then $\Add^W(\tau,\kappa)$, the version of $\Add(\tau,\kappa)$ defined in $W$, has the $\kappa$-Knaster property in $V$. (See \cite[Lemma 2.6]{Cummings-Foreman1998} and \cite{Abraham1983}.)\end{myfact}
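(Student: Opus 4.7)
The plan is to run a $\Delta$-system argument in $V$ while pushing all cardinal arithmetic into $W$ via the $\kappa$-covering assumption. Fix a sequence $\seq{p_\alpha}{\alpha < \kappa} \in V$ of conditions from $\Add^W(\tau,\kappa)$; each $p_\alpha$ lies in $W$ with $W$-support $d_\alpha := \dom(p_\alpha)$ of $W$-cardinality less than $\tau$. First I would check that $\kappa$ remains regular in $V$: a $V$-cofinal subset of $\kappa$ of size $\mu < \kappa$ would, by $\kappa$-covering, be contained in some $Y \in W$ with $|Y|^W < \kappa$, and then $Y$ would be $W$-cofinal in $\kappa$, witnessing $W$-singularity and contradicting inaccessibility. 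So $V$-side pigeonhole sorting $\kappa$-many items into fewer than $\kappa$ cells is available.

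Using this, I would normalize by two thinnings. Pigeonhole first to make $|d_\alpha|^W = \sigma$ constant for some $\sigma < \tau$. Then fix in $W$ the increasing enumeration $\vec{d}_\alpha$ of $d_\alpha$ in order type $\sigma$, and set $\vec{v}_\alpha := p_\alpha \circ \vec{d}_\alpha \in ({}^\sigma 2)^W$. Since $\sigma < \tau$ and $\kappa$ is inaccessible in $W$, this target set has $W$-cardinality less than $\kappa$, hence $V$-cardinality less than $\kappa$, and a second pigeonhole makes $\vec{v}_\alpha$ constantly equal to some $\vec{v}$.

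The main step is to extract a $\kappa$-sized sub-subsequence on which $\seq{d_\alpha}{\alpha < \kappa}$ forms a $\Delta$-system with root $r$, and then to run a final pigeonhole fixing the set $I \in [\sigma]^{|r|}$ of enumeration-positions occupied by $r$ in each $\vec{d}_\alpha$ (again there are fewer than $\kappa$ candidates, by inaccessibility in $W$). Once $I$ is constant, $\vec{d}_\alpha \uhr I$ is the unique increasing enumeration of $r$ for every $\alpha$, and since $\vec{v}_\alpha \uhr I = \vec{v} \uhr I$ is also constant, $p_\alpha \uhr r = p_\beta \uhr r$ for any two conditions in the final subsequence, making them compatible. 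The delicate piece, and where the $\kappa$-covering hypothesis really earns its keep, is justifying the $\Delta$-system extraction in $V$ without the usual cardinal-arithmetic prerequisites; following \cite[Lemma 2.6]{Cummings-Foreman1998} and \cite{Abraham1983}, the idea is to build the system recursively so that the partial data at each stage is captured by a $W$-set of $W$-cardinality less than $\kappa$, using covering to obtain such captures and inaccessibility of $\kappa$ in $W$ (notably $|[\lambda]^{<\tau}|^W < \kappa$ for $\lambda < \kappa$) to drive the standard argument inside $W$.
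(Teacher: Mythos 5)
Your proposal is correct and takes essentially the same route as the argument the paper invokes by citation (the Fact is stated without proof, deferring to \cite[Lemma 2.6]{Cummings-Foreman1998} and \cite{Abraham1983}): a $\Delta$-system extraction run in $V$, with the $\kappa$-covering property securing the regularity of $\kappa$ in $V$ and every pigeonhole pool being a $W$-set of $W$-cardinality below $\kappa$, hence of $V$-cardinality below $\kappa$ since $\kappa$ remains a cardinal in $V$. The one step you leave sketched, the $\Delta$-system extraction itself, does close along the lines you indicate, e.g.\ by pressing down on $\sup(d_\alpha \cap \alpha)$ over a stationary set of $\alpha < \kappa$ of suitable $V$-cofinality and then stabilizing the head $d_\alpha \cap \gamma$ inside $([\gamma]^{<\tau})^W$, which uses only the regularity of $\kappa$ in $V$ and $|[\lambda]^{<\tau}|^W < \kappa$ for $\lambda < \kappa$; no further appeal to covering is needed there beyond the regularity it already provides.
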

	
	% However, in our case we will obtain distributivity in another way because of the choice of $S$.
	
%	\marginpar{\tiny M: I've been sloppy with the issue regarding the way the $\Add$ conditions are taken from inner models}
	
	\begin{mydef}\label{DefM3}
		Let $W \subseteq V$ be models of $\textup{\textsf{ZFC}}$ containing the ordinals such $(\Add(\tau,\kappa))^W$ is $\mu$-Knaster. Let $\tau<\mu<\kappa$ be cardinals in $V$ such that $\tau^{<\tau}=\tau$ and $\kappa$ is inaccessible. Then we define $\dM^\oplus(\tau,\mu,\kappa,W)$ in $V$ to be the poset consists of pairs $(p,q)$ such that:
		\begin{enumerate}
			\item $p\in (\Add(\tau,\kappa))^W$
			\item $q$ is a $<\mu$-sized function on $\kappa$ such that
			\begin{itemize}
				\item[(a)] for each $\alpha\in\dom(q)$, $\alpha=\delta+1$ for an inaccessible cardinal $\delta$,
				\item[(b)] $q(\alpha)$ is an $(\Add(\tau,\alpha))^W$-name for an element in the chain forcing $\dP([H(\delta)]^{<\mu}\cap V[(\Add(\tau,\delta))^W])$.
				\end{itemize}
		\end{enumerate}
		We let $(p',q')\leq_{\dM^\oplus(\tau,\mu,\kappa,W)}(p,q)$ if
		\begin{enumerate}
			\item $p'\leq_{(\Add(\tau,\kappa))^W} p$
			\item $\dom(q')\supseteq\dom(q)$ and for all $\alpha\in\dom(q)$,
			$$p'\uhr\alpha\Vdash q'(\alpha)\leq_{\Add(\tau,\alpha)^W} q(\alpha).$$
		\end{enumerate}
		For simplicity, we define $\dM^{\oplus}(\tau,\mu,\kappa):=\dM^{\oplus}(\tau,\mu,\kappa,V)$.
	\end{mydef}
	
		 Given our definition of the Mitchell forcing, it then becomes clear that we can define a termspace forcing, which is essentially the main benefit of this presentation.

\begin{mydef} Let $\dT=\dT(\dM^\oplus(\tau,\mu,\kappa,W))$ be the termspace of $\dM^\oplus(\tau,\mu,\kappa,W)$. For the sake of explicitness, this is the poset consisting of conditions $q$ such that:

\begin{enumerate}

\item[(2)] $q$ is a $<\mu$-sized function such that for each $\alpha\in\dom(q)$:

\begin{itemize}
\item[(a)] $\alpha=\delta+1$ for an inaccessible cardinal $\delta$,
\item[(b)] $q(\alpha)$ is an $(\Add(\tau,\alpha))^W$-name for an element in the chain forcing $\dP([H(\delta)]^{<\mu}\cap V[(\Add(\tau,\delta))^W])$.
\end{itemize}

\end{enumerate}

Most importantly, we let $q \le q'$ if and only if:

\begin{enumerate}

\item $\dom q \supseteq \dom q'$,

\item for all $\alpha \in \dom q$, $\Vdash_{\dP([H(\delta)]^{<\mu}\cap V[(\Add(\tau,\delta))^W])} ``q(\alpha) \le q'(\alpha)$''.

\end{enumerate}
\end{mydef}

%We let let $\dT(\kappa_{n-1},\kappa_n,\kappa_{n+1})$ be the term ordering induced by $1_{\Add(\kappa_{n-1},\kappa_{n+1})}$ via $\dM(\kappa_{n-1},\kappa_n,\kappa_{n+1})$. As the term ordering is $\kappa_n$-strategically closed, so are $\dT(\kappa_{n-1},\kappa_n,\kappa_{n+1})$ and $\dP_0^n$.

%\begin{mydef}\cite{Jakob2023}
%		Let $\dP$ be a poset and $\delta$ a cardinal. $\dP$ is \emph{strongly ${<}\,\delta$-distributive} if for all sequences $(D_{\alpha})_{\alpha<\delta}$ of open dense subsets of $\dP$ there is a descending sequence $(p_{\alpha})_{\alpha<\delta}$ such that for every $\alpha<\delta$, $p_{\alpha}\in D_{\alpha}$.
%	\end{mydef}

Next we will establish strategic closure properties of our forcing.
	
	\begin{mydef}
		Let $\dP$ be a forcing order, $\delta$ an ordinal. The \emph{completeness game} $G(\dP,\delta)$ on $\dP$ with length $\delta$ has players $\COM$ (complete) and $\INC$ (incomplete) playing elements of $\dP$ with $\COM$ playing at even ordinals (i.e$.$ limit ordinals and ordinals of the form $\alpha+n$ for $\alpha$ a limit and $n<\omega$) and $\INC$ playing at odd ordinals. $\COM$ starts by playing $1_{\dP}$, afterwards $p_{\alpha}$ has to be a lower bound of $(p_{\beta})_{\beta<\alpha}$. $\INC$ wins if either player is unable to play at some point $<\delta$. Otherwise, $\COM$ wins.
		
	A poset $\dP$ is \emph{$\delta$-strategically closed} if $\COM$ has a winning strategy for the game $G(\dP,\delta)$. We say that $\dP$ is \emph{strongly $\delta$-strategically closed} if $\COM$ has a winning strategy for the version of the game where they play at odd ordinals and $\INC$ plays at even ordinals (see \cite{Handbook-Cummings} for background on these definitions).
	\end{mydef}
	
%	A forcing order $\dP$ is ${<}\,\kappa$-distributive if and only if for all $\alpha<\kappa$, $\textup{\textsf{INC}}$ does not have a winning strategy in $G(\dP,\alpha)$. The proof adapts to show the following:
	
%	\begin{myfact}
%		$\dP$ is strongly ${<}\,\kappa$-distributive if and only if $\textsf{\textup{INC}}$ does not have a winning strategy in $G(\dP,\kappa)$.
%	\end{myfact}

		The subtlety here is that, even though $\dP(S)$ is in most cases not $\mu$-strategically closed since it destroys the stationarity of a subset of $[\delta]^{<\mu}$, the term ordering on $\Add(\tau)*\dP([\delta]^{<\mu}\cap V)$ is $\mu$-strongly strategically closed.

	\begin{mylem}\label{StratClosed}
		Let $\tau<\mu<\delta$ be cardinals such that $\tau^{<\tau}=\tau$. Then the term forcing $\dT(\dM^\oplus(\tau,\mu,\kappa,W))$ is strongly $\mu$-strategically closed.
	\end{mylem}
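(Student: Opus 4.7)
The plan is to define an explicit winning strategy $\sigma$ for COM in the strong completeness game of length $\mu$ on $\dT$. The key observation is that the term forcing's ordering requires name inequalities to be forced by the trivial condition, so we have the freedom to define canonical ``union'' lower bounds for any decreasing sequence. Given a partial play $\seq{q_\beta}{\beta < \alpha}$ with $\alpha < \mu$ odd, $\sigma$ instructs COM to play the condition $q_\alpha$ defined by setting $\dom q_\alpha := \bigcup_{\beta < \alpha} \dom q_\beta$ and, for each $\zeta+1 \in \dom q_\alpha$, letting $q_\alpha(\zeta+1)$ be a canonically chosen $(\Add(\tau,\zeta+1))^W$-name for the sequence obtained by unioning the $q_\beta(\zeta+1)$'s (for those $\beta$ with $\zeta+1 \in \dom q_\beta$) and, when the resulting length is a limit, appending the top element $\bigcup_{\beta,\eta} q_\beta(\zeta+1)(\eta)$ to restore a closed, successor-length sequence.

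Three verifications are required. First, $|\dom q_\alpha| < \mu$ follows from the regularity of $\mu$ together with $\alpha < \mu$ and $|\dom q_\beta| < \mu$ for each $\beta$. Second, $q_\alpha \leq_{\dT} q_\beta$ for every $\beta < \alpha$, which is immediate from the domain containment and the fact that each $q_\alpha(\zeta+1)$ end-extends $q_\beta(\zeta+1)$. Third, and most substantively, each $q_\alpha(\zeta+1)$ must be a valid name for a condition in $\dP([H(\zeta)]^{<\mu} \cap V[(\Add(\tau,\zeta))^W])$: the combined length is $<\mu$ by regularity; the cap set has cardinality $<\mu$ by the same reasoning, lies in $H(\zeta)$ because each constituent does, and crucially lies in $V[(\Add(\tau,\zeta))^W]$ since each entry of each $q_\beta(\zeta+1)$ already does and the index set of the union is in $V$.

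At a limit stage $\gamma < \mu$ of the game (INC's turn in the strong version), the same canonical-union construction exhibits a condition in $\dT$ that is a lower bound of all previous plays, so INC can always play (for instance, this very condition). Hence the game proceeds through stage $\mu$ without either player getting stuck, and COM wins. The main obstacle is the third verification above: one must argue that the top set lies in $V[(\Add(\tau,\zeta))^W]$ and not merely in the larger extension by $(\Add(\tau,\zeta+1))^W$ to which the names $q_\beta(\zeta+1)$ are formally tied. This absoluteness is essentially what distinguishes the term ordering of $\dM^\oplus$ from the chain forcing $\dP$ itself, which fails to be $\mu$-strategically closed.
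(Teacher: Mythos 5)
There is a genuine gap, and it sits exactly where you flag it: the claim that the cap $\bigcup_{\beta,\eta} q_\beta(\zeta+1)(\eta)$ lies in $V[(\Add(\tau,\zeta))^W]$ ``since each entry already does and the index set of the union is in $V$'' is false. The index \emph{set} is in $V$, but the assignment $(\beta,\eta)\mapsto q_\beta(\zeta+1)(\eta)$ is only given by $(\Add(\tau,\zeta+1))^W$-names played by $\INC$, so the indexed family of values is computed from the full generic, and a union of $<\mu$ many inner-model sets along an outer-model enumeration can escape the inner model. Concretely, take $\tau=\omega$ and let $c$ be the new Cohen real added by the last coordinate of $\Add(\tau,\zeta+1)$ over $V[\Add(\tau,\zeta)]$: $\INC$ can play, at its $n$-th turn, a name whose top entry is forced to equal the previous entries together with an ordinal coding the pair $(n,c(n))$. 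Each top is then \emph{forced to be an element of} $V$ (hence of the inner model), but no single value is decided, and the union of the tops computes $c\notin V[\Add(\tau,\zeta)]$, so your cap does not name an element of the stationary set and $q_\alpha(\zeta+1)$ is not a name for a condition in $\dP$. Worse, against this line of play there is \emph{no} lower bound at stage $\omega$ at all: any condition below the play must, by closedness of conditions, take the union of the entries as its value at the limit of the domains, which would put $c$ into the inner model. So no ``canonical union'' repair exists; your argument, if it worked, would show the term ordering is $<\mu$-closed, which the example refutes. A telling symptom is that you never use the hypothesis $\tau^{<\tau}=\tau$ or any chain condition of $\Add(\tau,\cdot)^W$.

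What the proof actually requires is that $\COM$ tame $\INC$'s names at every one of its turns rather than defer everything to limits. After reducing (via the $<\mu$-support product structure of $\dT$) to the direct extension ordering on a single coordinate $\Add(\tau)^W * \dP([\delta]^{<\mu}\cap V)$, $\COM$ uses the $\mu$-Knaster property (in particular the $\mu$-cc) of $\Add(\tau)^W$ to compute, in the ground model, a bound $\nu'<\mu$ on every possible value of the domain of $\INC$'s last name and a set $x\in V$ of size $<\mu$ containing every ordinal that any condition forces into the union of $\INC$'s chain; $\COM$ then plays $\INC$'s name extended by one step with top the check name $\check{x}$, so that both the domain and the top entry of $\COM$'s play are \emph{decided}. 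At a limit stage the union of these decided tops is a union taken in $V$ of a $V$-sequence of $V$-sets, hence (by regularity of $\mu$) a legitimate element of the stationary set to append as a cap, and the game continues. This decidedness bookkeeping, powered by the chain condition of the Cohen factor, is the entire content of the lemma and is precisely what is missing from your proposal.
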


%	\begin{mylem}\label{M3TermOrderStratClos}
%		Let $\tau<\mu<\delta$ be cardinals such that $\tau^{<\tau}=\tau$. Then the direct extension ordering on $\Add(\tau)*\dP([\delta]^{<\mu}\cap V)$ is $\mu$-strongly strategically closed.
%	\end{mylem}
	
	\begin{proof} Since products of strongly $\mu$-strategically closed forcings are strongly $\mu$-strategically closed, it is sufficient to argue that the direct extension ordering on $\Add(\tau)^W *\dP([\delta]^{<\mu}\cap V)$, i.e$.$ the ordering $\le^*$ for which $(p,q) \le^* (p',q')$ holds if and only if $p = p'$ and $q \le q'$, is $\mu$-strongly strategically closed.\footnote{The strategicaly closure of the direct extension ordering was first noticed by Krueger with a different proof \cite{Krueger2008b}.} We will suppress notation for the inner model $W$ in this proof for the sake of readability.

		We give a winning strategy for $\COM$ by describing a play of the game of the form $\seq{(p,q_\gamma)}{\gamma<\mu}$ where $p\in\Add(\tau)$. At any odd stage $\gamma$, $\COM$ will play $\dot{q}_{\gamma}$ such that the following holds:
		\begin{enumerate}
			\item There is $\nu_{\gamma}$ such that $p\Vdash\dom(\dot{q}_{\gamma})=\check{\nu}_{\gamma}+1$
			\item There is $x_{\gamma} \in V$ such that $p\Vdash\dot{q}_{\gamma}(\check{\nu}_{\gamma})=\check{x}_{\gamma}$.
		\end{enumerate}
We will argue both that this choice will be possible at every stage and that this is sufficient to keep the game going.
		
		Suppose that $\gamma$ is a limit ordinal: If $\COM$ has played according to the strategy until $\gamma$, we let $\nu_{\gamma}:=\bigcup\{ \nu_{\alpha}:\alpha<\gamma,\alpha \in \Odd\}$ and $x_{\gamma}:=\bigcup \{x_{\alpha}:\alpha<\gamma,\alpha \in \Odd\}$. Then we can find a lower bound: Let $\dot{q}_{\gamma}$ be a name for a condition with domain $\nu_\gamma+1$ such that $\dot{q}_{\gamma}(\alpha)=\dot{q}_{\beta}(\alpha)$ for some $\beta<\alpha$ whenever $\alpha<\nu_{\gamma}$ and such that $\dot{q}_{\gamma}(\nu_{\gamma})=x_{\gamma}$. In particular, this works because we have made it explicit that $x_\gamma \in V$.
		
		Now assume $\gamma=\beta+1$ is a successor ordinal and $\INC$ has just played $\dot{q}_{\beta}$. Because $\Add(\tau)$ is $\mu$-Knaster and in particular has the $<\mu$-covering property, $\nu_{\gamma}':=\sup\{\nu\;|\;\exists p'\leq p(p'\Vdash\dom(\dot{q}_{\beta})=\check{\nu})\}$ is below $\mu$ and $x_{\gamma}:=\{\epsilon\;|\;\exists p'\leq p(p'\Vdash\check{\epsilon}\in\bigcup\dot{q}_{\beta})\}$ has size $<\mu$. Let $\dot{q}_{\gamma}$ be a function with domain $\nu_{\gamma}'+1$ extending $\dot{q}_\beta$ and such that $\dot{q}_{\gamma}(\nu_{\gamma}')=\check{x}_{\gamma}$.
		
		%$\dot{q}_{\gamma}$ repeats $\dot{q}_{\beta}(\dom(\dot{q}_{\beta})-1)$ for every ordinal between $\dom(\dot{q}_{\beta})-1$ and $\nu_{\gamma}'$, and $\dot{q}_{\gamma}(\nu_{\gamma}')=\check{x}_{\gamma}$.
		
%		\marginpar{\tiny M: I don't know if we're allowed to repeat $\dot{q}_{\beta}(\dom(\dot{q}_{\beta})-1)$ since the poset might be expected to consist of increasing sequences. There's no reason you should be aware of this, but this is at least how it is in Definition 4.3 of ``On the structure of stationary sets'' by Feng, Jech, and Zapletal}
		
		We show that $\dot{q}_{\gamma}$ is as required: $\dot{q}_{\gamma}$ is obviously forced to extend $\dot{q}_{\beta}$. Furthermore, the values of $\dot{q}_{\gamma}$ are forced to be elements of $V$: Until $\nu_{\gamma}'$ this holds because $\dot{q}_{\beta}$ is forced to be in $\dP([\delta]^{<\mu}\cap S)$. At $\nu'_\gamma$, it holds because $\dot{q}_{\gamma}(\nu_{\gamma}')$ is the check-name $\check{x}_\gamma$. Lastly, $\dot{q}_{\gamma}$ is continuous at every limit and increasing. Because $\dom(\dot{q}_{\gamma})=\check{\nu_{\gamma}+\alpha+1}$ and $\dot{q}_{\gamma}(\check{\nu}_{\gamma}+\alpha)=\check{x}_{\gamma}$, we are done.
	\end{proof}

	In particular, by Easton's Lemma, $\dP([\delta]^{<\mu}\cap V)$ is ${<}\,\mu$-distributive (actually strongly ${<}\,\mu$-distributive) in $V[\Add(\tau)]$.
	
		We note that what we have given is actually a winning \emph{tactic}, i.e$.$ in successor stages the play by $\COM$ depends only on the last play of $\INC$, not on the plays before that (see \cite{Yoshinobu2017}).

	Now we can show that $\dM^\oplus$ has similar properties to more standard versions of Mitchell forcing:

\begin{mypro} Let $\dM^\oplus = \dM^\oplus(\tau,\mu,\kappa,W)$:

\begin{enumerate}
\item $\dM^\oplus$ is $\kappa$-Knaster,
\item $\dM^\oplus$ is a projection of the product $\Add(\tau,\kappa)^W \times \dT(\dM^\oplus(\tau,\mu,\kappa,W))$,
\item $\dM^\oplus$ forces $\kappa = 2^\tau = \mu^+ = \tau^{++}$.
\end{enumerate}\end{mypro}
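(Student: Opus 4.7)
The plan is to handle the three clauses separately, relying on the termspace closure already established in Lemma \ref{StratClosed} and the $\kappa$-Knaster hypothesis on $(\Add(\tau,\kappa))^W$.

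For (1), I would run a $\Delta$-system argument adapted to the mixed-support structure. Given a sequence $\seq{(p_\alpha,q_\alpha)}{\alpha<\kappa}$, first refine to an unbounded set on which the first coordinates are pairwise compatible using the $\kappa$-Knaster property of $(\Add(\tau,\kappa))^W$. Then, since each $q_\alpha$ has support of size $<\mu<\kappa$ and $\kappa$ is inaccessible, the standard $\Delta$-system lemma yields a further refinement with a common root $\Delta$. On $\Delta$, each coordinate $\beta\in\Delta$ is of the form $\delta+1$ for some inaccessible $\delta<\kappa$, and the number of $(\Add(\tau,\beta))^W$-nice names for a ${<}\mu$-sized object in $V$ is bounded by an ordinal below $\kappa$ (by inaccessibility plus $\tau^{<\tau}=\tau$). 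A final pigeonhole refinement forces the sequences $\seq{q_\alpha(\beta)}{\beta\in\Delta}$ to be constant on a $\kappa$-sized subfamily, so the resulting conditions will be pairwise compatible in $\dM^\oplus$.

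For (2), I would take the identity map $\pi(p,q):=(p,q)$ from $(\Add(\tau,\kappa))^W\times\dT$ to $\dM^\oplus$. Order-preservation and density of the image are immediate from the definitions. The substantive step is lifting: given $(p'',q'')\leq_{\dM^\oplus}(p,q)$, I construct a product extension $(p'',q')$ with $q'\leq_{\dT} q$ and $(p'',q')\leq_{\dM^\oplus}(p'',q'')$ via the familiar mixed-name trick. Namely, set $\dom(q'):=\dom(q)\cup\dom(q'')$ and for each $\alpha$ in this domain define $q'(\alpha)$ to be an $(\Add(\tau,\alpha))^W$-name that equals $q''(\alpha)$ below $p''\uhr\alpha$ and equals $q(\alpha)$ elsewhere (or $1$ if $\alpha\notin\dom(q)$). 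Since $p''\uhr\alpha\Vdash q''(\alpha)\leq q(\alpha)$ on $\dom(q)$, this mixed name is unconditionally forced to extend $q(\alpha)$, which is exactly what the termspace order requires.

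For (3), everything assembles from (1), (2), and Lemma \ref{StratClosed}. The Knaster property of (1) preserves $\kappa$. Counting conditions under $\tau^{<\tau}=\tau$ and the inaccessibility of $\kappa$ yields $|\dM^\oplus|\leq\kappa$, so the extension satisfies $2^\tau\leq\kappa$; the $\Add(\tau,\kappa)^W$ generic, obtained by projecting off the termspace via (2), provides the reverse inequality, giving $2^\tau=\kappa$. For each inaccessible $\delta<\kappa$, the forcing adjoins a continuous chain of length $\mu$ through $[H(\delta)]^{<\mu}$ whose union is $H(\delta)$, collapsing $\delta$ to $\mu$; as such $\delta$ appear cofinally, $\kappa=\mu^+$ in the extension. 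Finally, $\mu=\tau^+$ is preserved because the product of the $\tau^+$-cc forcing $(\Add(\tau,\kappa))^W$ with the $\mu$-strategically closed $\dT$ is $\mu$-preserving by Easton's Lemma, and by (2) any collapse in $\dM^\oplus$ would lift to the product. The main technical obstacle I expect to negotiate is in (2): verifying that the mixed name $q'(\alpha)$ is actually a legitimate $(\Add(\tau,\alpha))^W$-name for an element of $\dP([H(\delta)]^{<\mu}\cap V[(\Add(\tau,\delta))^W])$ (rather than merely a condition in $V$), which depends on the fact that the relevant ground model $W$ and the covering relationship with $V$ are preserved when we mix.
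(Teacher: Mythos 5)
Your proposal is correct and follows essentially the same route as the paper's (sketched) proof: a $\Delta$-system plus pigeonhole argument on nice names for the $\kappa$-Knaster property, mixing of names for the projection, and Easton's Lemma combined with clauses (1) and (2) for the cardinal arithmetic. One small inaccuracy worth noting: in (1) the objects named by $q(\alpha)$ are conditions of $\dP([H(\delta)]^{<\mu}\cap V[(\Add(\tau,\delta))^W])$, whose entries live in the intermediate extension rather than in $V$, but your counting of nice names via the inaccessibility of $\kappa$ goes through unchanged.
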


\begin{proof}[Sketch] Recall that the first point follows from a $\Delta$-system argument, the second point uses some mixing of forcing names, and the third point uses the first two points along with Easton's Lemma.\end{proof}

The next point will be the crux of what is needed to bring Krueger's arguments into our context.
	
			\begin{mylem}\label{ApproxProp}
$\dM^{\oplus}(\tau,\mu,\kappa,W)$ has the $<\mu$-approximation property.\end{mylem}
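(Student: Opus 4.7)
The plan is to reduce the $<\mu$-approximation property for $\dM^{\oplus}$ to the product $\Add(\tau,\kappa)^W \times \dT(\dM^{\oplus})$ via the projection established in the previous proposition, and then establish it for the product using the combination of chain condition on the Cohen side and strategic closure on the termspace side.

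For the reduction, I would note that the ${<}\mu$-approximation property is downward-preserved along forcing projections: if $V \subseteq V[G] \subseteq V[H]$ and $V \subseteq V[H]$ has the property, then so does $V \subseteq V[G]$, because every set of ordinals in $V[G]$ is also a set of ordinals in $V[H]$ with exactly the same $V$-sized approximations. Hence it suffices to establish the property for the product.

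For the product, I would argue by contradiction in the Hamkins style. Suppose some $(p_0, q_0) \in \Add(\tau,\kappa)^W \times \dT$ forces a name $\dot{A}$ for a set of ordinals such that every $V$-sized ${<}\mu$-approximation of $\dot{A}$ lies in $V$ while $\dot{A} \notin V$. Using the strong $\mu$-strategic closure of $\dT$ (Lemma~\ref{StratClosed}), I would construct a ${<}\mu$-length decreasing sequence $\seq{q_\alpha}{\alpha<\mu}$ of termspace conditions extending $q_0$, where at each successor step a splitting on the Cohen side below an accumulated condition $p_\alpha$ forces contradictory decisions about some newly chosen ordinal of $\dot{A}$. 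The $\mu$-Knaster property of $\Add(\tau,\kappa)^W$ then yields a pairwise compatible family of $\mu$-many Cohen conditions drawn from the construction, whose combined forced decisions about $\dot{A}$ encode a single ${<}\mu$-sized approximation of $\dot{A}$ that disagrees with every element of $V$, contradicting the hypothesis on $\dot{A}$.

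The main obstacle will be arranging the bookkeeping so that the accumulated splittings along the construction aggregate into a \emph{single} ${<}\mu$-sized $V$-approximation witnessing failure, rather than a scattered collection of ordinal-by-ordinal disagreements; this requires coordinating the choice of splitting ordinals with the indexing set used on the Knaster side. Strong strategic closure (as opposed to mere strategic closure) of $\dT$ is essential here so that the termspace-side sequence can always be continued at limit stages in response to splittings imposed externally by the approximation argument, rather than by $\COM$'s free choice.
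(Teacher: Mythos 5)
Your reduction step is logically valid but reduces the lemma to a \emph{false} statement, and this is fatal to the whole plan. The product $\Add(\tau,\kappa)^W \times \dT$ does \emph{not} have the ${<}\,\mu$-approximation property: $\dT$ is atomless and, being strongly $\mu$-strategically closed, is ${<}\,\mu$-distributive, so its generic filter $G_\dT$ (coded as a set of ordinals via a bijection in $V$) is fresh. Indeed, for any $x\in V$ of size ${<}\,\mu$, meeting the ${<}\,\mu$ many dense sets $D_p=\{q\in\dT : q\le p \text{ or } q\perp p\}$ for $p\in x$ yields a single condition in $G_\dT$ deciding membership in the generic for every $p\in x$, so $G_\dT\cap x\in V$, while $G_\dT\notin V$. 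By your own (correct) downward-transfer observation, the failure of approximation for $V\subseteq V[G_\dT]$ already entails its failure for $V\subseteq V[G_\dA\times G_\dT]$. So no bookkeeping can make your second step work; concretely, the construction breaks at the splitting step, where you need $p^0,p^1\le p_\alpha$ and a \emph{common} termspace condition $q_{\alpha+1}$ such that $(p^0,q_{\alpha+1})$ and $(p^1,q_{\alpha+1})$ decide contradictorily: in a genuine product, two termspace extensions $q^*,q^{**}$ of $q_\alpha$ realizing the two decisions need not be compatible, and the counterexample above shows this obstruction is essential rather than technical.

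The paper's proof therefore avoids the product and works directly with $\dM^\oplus$, exploiting that the second coordinates are \emph{names}: Claim~\ref{DecisionByP} mixes two termspace conditions along incompatible Cohen conditions into a single $q'\le_\dT q$ with $(p_i,q')\Vdash\dot{x}=\check{y}_i$ for $i\in 2$. This mixing is exactly the ``iteration-like'' device that substitutes for the impossible product splitting, and it is why the approximation property holds for the projection $\dM^\oplus$ even though it fails for the product projecting onto it. The remainder of the paper's argument matches your outline in spirit --- a ${<}\,\mu$-length construction sustained by the strong strategic closure of $\dT$, with $\INC$ playing the externally demanded conditions and $\COM$'s strategy guaranteeing continuation at limits --- but the closing contradiction also differs from yours: one arranges that $(p^0_\gamma,q_\gamma)$ and $(p^1_\gamma,q_\gamma)$ decide $\dot{f}\uhr\check{y}_\alpha$ identically for $\alpha<\gamma$ and differently at $\alpha=\gamma$, so that the pairs $(p^0_\gamma,p^1_\gamma)$ form a $\mu$-sized antichain in $\dA\times\dA$, contradicting the $\mu$-cc of the square of a $\mu$-Knaster forcing. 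No aggregation of decisions into ``a single ${<}\,\mu$-sized approximation disagreeing with every element of $V$'' occurs, and such an aggregate would not by itself contradict the hypothesis, since the hypothesis only constrains approximations along sets in $V$.
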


	The argument uses the fact that $\dM^{\oplus}(\tau,\mu,\kappa,W)$ is \emph{iteration-like}, meaning that we can mix the conditions in the second coordinate to ``move disagreements into the first coordinate''.\footnote{See \cite{Jakob2023} for a generalization that uses the notion of strong distributivity, due to the first author.} Our argument here uses ideas of Usuba \cite{Usuba2014} and Unger \cite{Unger2015}.

	\begin{proof}[Proof of Lemma~\ref{ApproxProp}]
	
	Let us abbreviate $\dM^{\oplus}(\tau,\mu,\kappa,W)$ as $\dM$.

	\begin{myclan}\label{DecisionByP}
		Suppose that $(p,q) \in \dM$ forces $\dot{x}\in V$ but that there is no $y\in V$ such that $(p,q)$ forces $\dot{x}=\check{y}$. Then there are $q',p_0,p_1 \le p$ and $y_0\neq y_1$ such that $q' \le_\dT q$ and $y_i \in V$ and $(p_i,q')\Vdash\dot{x}=\check{y}_i$ for $i\in 2$.
	\end{myclan}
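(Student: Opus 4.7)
The plan is to witness the claim by finding extensions $(p_0,q_0), (p_1,q_1) \le_\dM (p,q)$ with $p_0, p_1$ incompatible in $\Add(\tau,\kappa)^W$ at a low enough coordinate that the restrictions $p_0 \uhr \alpha, p_1 \uhr \alpha$ remain incompatible for every $\alpha$ in the supports of $q_0$ and $q_1$, and then to fuse $q_0, q_1$ into a single termspace extension $q' \le_\dT q$ by defining $q'(\alpha)$ as an $\Add(\tau,\alpha)^W$-name that switches between $q_0(\alpha), q_1(\alpha)$, and $q(\alpha)$ depending on which (if any) of $p_0 \uhr \alpha, p_1 \uhr \alpha$ lies in the generic.

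For the first step, since $(p,q)$ forces $\dot x \in V$ but does not decide $\dot x$, a standard density argument gives $(p^*, q_0), (p^{**}, q_1) \le_\dM (p,q)$ with $(p^*, q_0) \Vdash \dot x = \check y_0$ and $(p^{**}, q_1) \Vdash \dot x = \check y_1$ for some $y_0 \neq y_1$. To arrange uniform incompatibility of the first coordinates' restrictions, I would choose $\delta < \tau$ with $(0,\delta) \notin \dom p^* \cup \dom p^{**}$ (possible since these domains have size $<\tau$) and set $p_0 := p^* \cup \{((0,\delta),0)\}$ and $p_1 := p^{**} \cup \{((0,\delta),1)\}$. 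Then $p_0, p_1 \le p$ disagree at $(0,\delta)$, and because every $\alpha \in \dom q_0 \cup \dom q_1$ is a successor of an inaccessible (hence $> 0$), we have $p_0 \uhr \alpha \perp p_1 \uhr \alpha$ in $\Add(\tau,\alpha)^W$ for every relevant $\alpha$, while $(p_i, q_i) \Vdash \dot x = \check y_i$ is preserved.

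For the second step, define $q'$ with $\dom q' := \dom q_0 \cup \dom q_1$. For each $\alpha \in \dom q'$, let $q'(\alpha)$ be an $\Add(\tau,\alpha)^W$-name prescribed as follows: below $p_0 \uhr \alpha$, $q'(\alpha) := q_0(\alpha)$ if $\alpha \in \dom q_0$ and $q'(\alpha) := q(\alpha)$ (or the trivial condition) otherwise; below $p_1 \uhr \alpha$, analogously with $q_1$; and below any condition incompatible with both $p_i \uhr \alpha$, $q'(\alpha) := q(\alpha)$ (or the trivial condition). The incompatibility of $p_0 \uhr \alpha$ with $p_1 \uhr \alpha$ ensures this prescription is consistent.

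The verifications are then routine: $q' \le_\dT q$ holds because for $\alpha \in \dom q$ we have $p_i \uhr \alpha \Vdash q_i(\alpha) \le q(\alpha)$ by $(p_i,q_i) \le_\dM (p,q)$, so below $p_i \uhr \alpha$ the name $q'(\alpha)$ extends $q(\alpha)$, while elsewhere $q'(\alpha) = q(\alpha)$ by construction. Moreover $(p_i, q') \le_\dM (p_i, q_i)$ because, for each $\alpha \in \dom q_i$, $p_i \uhr \alpha$ forces $q'(\alpha) = q_i(\alpha)$; hence $(p_i, q') \Vdash \dot x = \check y_i$ as required. The main obstacle, and the step that demands care, is arranging the incompatibility of $p_0 \uhr \alpha$ and $p_1 \uhr \alpha$ \emph{simultaneously} for every $\alpha$ in the relevant supports. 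Without this, the three pieces of the definition of $q'(\alpha)$ could overlap and one could not cleanly recover $q_i(\alpha)$ below $p_i$; forcing the disagreement at coordinate $0$, which sits below every support element, is what makes the mixing go through uniformly.
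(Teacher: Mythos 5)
Your proof is correct, and it takes a genuinely different (and in one respect more careful) route than the paper's. The paper argues by cases according to whether some pure second-coordinate extension $(p,q^*)\leq_{\dM}(p,q)$ already decides $\dot{x}$. In its Case 2 no two-sided mixing is ever needed: one arranges $q^*\leq_{\dT}q$ by the one-sided mixing (use $q^*(\alpha)$ below $p_0\uhr\alpha$, $q(\alpha)$ elsewhere), then extends $(p,q^*)$ to $(p_1,q')$ deciding a different value with $q'\leq_{\dT}q^*$, and the term-ordering transitivity gives $(p_0,q')\leq_{\dM}(p_0,q^*)$ for free. In its Case 1 the paper does perform a two-sided mixing below incompatible $p_0,p_1\leq p$, appealing to ``standard arguments for the construction of names.'' You instead give a uniform, case-free argument: take any two extensions deciding $\dot{x}$ differently and manufacture the incompatibility yourself at a coordinate $(0,\delta)$ lying below every $\alpha$ in the supports --- legitimate since all such $\alpha$ are successors of inaccessibles, hence positive, so $p_0\uhr\alpha\perp p_1\uhr\alpha$ for every relevant $\alpha$ and the three-way prescription of $q'(\alpha)$ is consistent. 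Your closing observation identifies exactly the delicate point: mere incompatibility of $p_0,p_1$ in $\Add(\tau,\kappa)^W$ does not make the two-sided equality-mixing well-defined, because the restrictions $p_0\uhr\alpha,p_1\uhr\alpha$ can be compatible at small $\alpha$, and in that overlap the names $q_0(\alpha),q_1(\alpha)$ need not admit a common strengthening (indeed the two deciding conditions are incompatible in $\dM$ since $y_0\neq y_1$). So your coordinate-$(0,\delta)$ trick makes explicit what the paper's Case 1 leaves to the reader, while the paper's case split buys an argument in which almost all mixing is one-sided; both deliver the claim in full, and your remaining steps (obtaining $y_1\neq y_0$ by first forcing $\dot{x}\neq\check{y}_0$, checking $\dom q\subseteq\dom q_0$ so that $q'\leq_{\dT}q$, and verifying $(p_i,q')\leq_{\dM}(p_i,q_i)$) are all sound.
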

	
%		\begin{myclan}\label{DecisionByP}
%		Suppose that $(p,q) \in \dM$ forces $\dot{x}\in V$ but that there is no $y\in V$ such that $(p,q)$ forces $\dot{x}=\check{y}$. Then there are $q'\in\dQ$, $p_0,p_1 \le p$ and $y_0\neq y_1$ such that $(p,q') \le_{\dM} (p,q)$ and for $i\in 2$, $y_i \in V$ and $(p_i,q')\Vdash\dot{x}=\check{y}_i$.
%	\end{myclan}
	
	\begin{proof}
		We consider two possible cases:

\emph{Case 1:} There are $q^*$ and $y_0$ such that $(p,q^*) \le_{\dM} (p,q)$ and $(p,q^*)\Vdash\dot{x}=\check{y}_0$. 

Then choose $(p_1,q^{**}) \le_{\dM} (p,q)$ and some $y_1$ such that $(p_1,q^{**}) \Vdash \dot{x} = \check{y}_1$. Strengthen if necessary to assume that $p_1$ is strictly below $p$ and choose $p_0 \le p$ incompatible with $p_1$. Using standard arguments for the construction of names, there is $q'$ such that $q' \le_\dT q$ and such that for all $\alpha \in \dom q^{**}$, $p_1 \Vdash q^{**}(\alpha)=q'(\alpha)$ and for all $\alpha \in \dom q^*$, $p_0 \Vdash q^*(\alpha) = q'(\alpha)$. Then we have $(p_0,q') \le_{\dM} (p_0,q^*) \le_{\dM} (p,q^*) \le_{\dM} (p,q)$ and $(p_1,q') \le_{\dM} (p_1,q^{**}) \le_{\dM} (p,q)$, and so have this case of the claim.

\emph{Case 2:} For all $q^*$ with $(p,q^*)\le_{\dM}(p,q)$, $(p,q^*)\not\Vdash\dot{x}=\check{y}_0$ for any $y_0$. 

Then choose $(p_0,q^*) \le_{\dM} (p,q)$ forcing $\dot{x}=\check{y}_0$ for some $y_0$. Using the mixing of names, we can assume that $q^* \le_{\dT} q$, and hence that $(p,q^*) \le_{\dM} (p,q)$. The present case implies that $(p,q^*) \not\Vdash\dot{x}=\check{y}_0$, so there is some $(p_1,q') \le_{\dM} (p,q^*)$ forcing $\dot{x}=y_1$ for some $y_1 \ne y_0$. Again we can assume that $q' \le_{\dT} q^*$. Therefore $(p_0,q') \le_{\dM} (p_0,q^*) \le_{\dM} (p,q^*) \le_{\dM} (p,q)$ and $(p_1,q') \le_{\dM} (p,q^*) \le (p,q)$.\end{proof}	
	
%	\emph{Case 1:} There is $q_0$ such that $(p,q_0) \le_{\dM} (p,q)$ and $(p,q_0)\Vdash\dot{x}=\check{y}_0$. Then let $(p',q') \le_{\dM}(p,q)$ force $\dot{x}=\check{y}_1$ for some $y_1\neq y_0$. Using standard arguments for the construction of names, we can choose a condition $q_1$ in $\dT$ such that $(p,q_1)\le_{\dM}(p,q)$ and $(p',q_1)\le_{\dM}(p',q')$. Then we immediately have $(p',q_1)\le_{\dM} (p',q)$ and $(p',q_0)\le_{\dM}(p',q)$. It follows directly that they are as required.
%
%
%
%\emph{Case 2:} For all $q_0$ with $(p,q_0)\le_{\dM}(p,q)$, $(p,q_0)\not\Vdash\dot{x}=\check{y}_0$ for any $y_0$. Again, let $(p_0,q'')\Vdash\dot{x}=\check{y}_0$. We can assume $(p,q'')\le_{\dM}(p,q)$. It follows that $(p,q'')\not\Vdash\dot{x}=\check{y}_0$, so there is $(p_1,q')\le_{\dM}(p,q'')$ forcing $\dot{x}=\check{y}_1$ for some $y_1\neq y_0$. Since we can again assume $(p,q')\le_{\dM}(p,q'')\le_{\dM}(p,q)$, it follows that $(p_0,q')\le_{\dM}(p_0,q'')$, so we are done.
%	

		Now suppose for contradiction that the lemma is false. Let $\dot{f}$ be an $\dM$-name such that some $(p,q)$ forces every $<\mu$-approximation to be in $V$, but $\dot{f}$ itself to be outside of $V$. For simplicity, assume $(p,q)=1_{\dM}$.
		
	We will use the winning strategy for $\COM$ in the completeness game of length $\mu$ played on $\dT$. More precisely, the values of $q_\gamma$ chosen for $\gamma \in \Even$ are chosen by $\INC$, and the construction continues because of the winning strategy for $\COM$.
	
	We will construct $(p_{\gamma}^0,p_{\gamma}^1,q_\gamma,y_{\gamma})_{\gamma\in\Even}$ such that

		\begin{enumerate}
			\item $y_{\gamma}\in [V]^{<\mu}\cap V$, the sequence $(y_{\gamma})_{\gamma\in\Even}$ is $\subseteq$-increasing,
			\item the $q_\gamma$'s are $\le_\dT$-decreasing,
			\item $(p_{\gamma}^0,q_{\gamma})$ and $(p_{\gamma}^1,q_{\gamma})$ decide $\dot{f}\uhr \check{y}_{\alpha}$ the same way for $\alpha<\gamma$, but differently for $\alpha=\gamma$.
		\end{enumerate}
		Assume the game has been played until some even ordinal $\gamma<\mu$. Let $y_{\gamma+1}':=\bigcup_{\alpha<\gamma}y_{\gamma}$, which has size $<\mu$. Because $\dot{f}$ is forced not to be in $V$, there is $y_{\gamma+1}\supseteq y_{\gamma+1}'$ of size $<\mu$ such that $(1_{\dA},q_{\gamma})$ does not decide $\dot{f}\uhr\check{y}_{\gamma+1}$. Thus, we find all required objects by appealing to Claim~\ref{DecisionByP}. Formally, we can choose the plitting below $(p_\gamma^0,q_\gamma)$ at every step.

		We claim that $\{(p_{\gamma}^0,p_{\gamma}^1)\;|\;\gamma\in\Even\}$ is an antichain in $\dA \times \dA$ where $\dA:=\Add(\tau,\kappa)^W$, obtaining a contradiction since it is known that $\dA \times \dA$ has the $\mu$-chain condition. To this end, assume $(p^0,p^1) \le_{\dA \times \dA} (p_{\gamma}^0,p_{\gamma}^1),(p_{\gamma'}^0,p_{\gamma'}^1)$ with $\gamma>\gamma'$. Because $p^0 \le_{\dA} p_{\gamma'}^0$ and $p^1 \le_{\dA} p_{\gamma'}^1$, $(p^0,q_{\gamma}) \le_{\dM} (p^0,q_{\gamma'}) \le_{\dM} (p_{\gamma'}^0,q_{\gamma'})$ and $(p^1,q_{\gamma}) \le_{\dM} (p^1,q_{\gamma'}) \le_{\dM} (p_{\gamma'}^1,q_{\gamma'})$ decide $\dot{f}\uhr\check{y}_{\gamma'}$ differently, but because $p^0 \le_{\dA} p_{\gamma}^0$ and $p^1 \le_{\dA} p_{\gamma}^1$, $(p^0,q_{\gamma}) \le_{\dM} (p_{\gamma}^0,q_{\gamma})$ and $(p^1,q_{\gamma}) \le_{\dM} (p_{\gamma}^1, q_{\gamma})$ decide $\dot{f}\uhr\check{y}_{\gamma'}$ the same way, a contradiction.
	\end{proof}

%	\begin{mybem} Lemma~\autoref{ApproxProp} can also be proved in the context of strong distributivity.\end{mybem}
	
	\subsection{Considering Quotients}

	As is common when working with variants of Mitchell forcing, we give an explicit description of the quotient forcing. In this subsection we will define and state what we need in order to carry out the proof of our main theorem, leaving out some of the details that are addressed elsewhere in the literature.	
	
	\begin{mydef}
		Let $\tau<\mu<\nu<\kappa$ be cardinals such that $\Add(\tau,\kappa)^W$ is $\mu$-Knaster. Let $G\subseteq\dM^\oplus(\tau,\mu,\nu,W)$ be a generic filter. In $V[G]$, define $\dM^\oplus(G,\tau,\mu,\kappa\smallsetminus\nu,W)$ to consist of $(p,q)$ such that
		\begin{enumerate}
			\item $p\in\Add(\tau,\kappa\smallsetminus\nu)^W$
			\item $q$ is a partial function on $\kappa\smallsetminus\nu$ of size $<\mu$ such that for each $\alpha\in\dom(q)$, $\alpha=\nu+1$ for an inaccessible cardinal $\nu$ and $q(\alpha)$ is an $\Add(\tau,\alpha\smallsetminus\nu)$-name for an element of $\dP([\nu]^{<\mu}\cap V[G][\Add(\tau,\nu\smallsetminus\nu)])$.
		\end{enumerate}
		We let $(p',q')\leq(p,q)$ if
		\begin{enumerate}
			\item $p'\leq p$
			\item $\dom(q')\supseteq\dom(q)$ and for all $\alpha\in\dom(q)$,
			$$p'\uhr\alpha\Vdash q'(\alpha)\leq q(\alpha).$$
		\end{enumerate}
	\end{mydef}
	
	We remark that we technically do not need the generic $G$ to define the quotient.
	
	The next lemma follows similarly to other known variants of Mitchell Forcing.
	
	\begin{mylem}\label{M3Decom}
		Let $\tau<\mu<\nu<\kappa$ be regular cardinals such that $\tau^{<\tau}=\tau$ and $\nu,\kappa$ are inaccessible. There is a dense embedding from $\dM^\oplus(\tau,\mu,\kappa,W)$ into $\dM^\oplus(\tau,\mu,\nu,W)*\dM^\oplus(G,\tau,\mu,\kappa\smallsetminus\nu,W)$.
	\end{mylem}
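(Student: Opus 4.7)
The plan is to define the dense embedding via the natural splitting of conditions at $\nu$ and to establish density using the projection analysis from the previous subsection. Send a condition $(p,q)\in\dM^\oplus(\tau,\mu,\kappa,W)$ to
$$\pi(p,q):=\bigl((p\uhr\nu,q\uhr\nu),\dot{r}_{(p,q)}\bigr),$$
where $\dot r_{(p,q)}$ is a $\dM^\oplus(\tau,\mu,\nu,W)$-name for the pair whose first coordinate is $p\uhr[\nu,\kappa)\in\Add(\tau,\kappa\smallsetminus\nu)^W$ and whose second coordinate is the function with domain $\dom(q)\cap[\nu,\kappa)$ whose value at $\alpha$ is obtained from $q(\alpha)$ by absorbing the $\Add(\tau,\nu)^W$-initial segment of the name into the generic $G$, producing an $\Add(\tau,\alpha\smallsetminus\nu)^W$-name as required by the quotient. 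Order preservation and preservation of incompatibility then follow by unpacking the orderings, using that the restriction maps $\Add(\tau,\kappa)^W\to\Add(\tau,\nu)^W$ and $\Add(\tau,\kappa)^W\to\Add(\tau,\kappa\smallsetminus\nu)^W$ are compatible with the forcing relations used to compare the name entries $q(\alpha)$.

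The substantive content is density. Given $((p_0,q_0),\dot r)$ with $\dot r$ naming $(\dot p_1,\dot q_1)$ in the quotient, the goal is to produce $(p,q)\in\dM^\oplus(\tau,\mu,\kappa,W)$ with $\pi(p,q)\leq((p_0,q_0),\dot r)$. The strategy is to strengthen $(p_0,q_0)$ to some $(p_0',q_0')$ and select concrete $p_1^*\in\Add(\tau,\kappa\smallsetminus\nu)^W$ and a concrete $<\mu$-sized function $q_1^*$ with entries that are $\Add(\tau,\alpha\smallsetminus\nu)^W$-names in $V$, such that $(p_0',q_0')\Vdash(\check p_1^*,\check q_1^*)\leq\dot r$. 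Then $(p,q):=(p_0'\cup p_1^*,q_0'\cup q_1^*)$ --- after translating each entry of $q_1^*$ back to an $\Add(\tau,\alpha)^W$-name by re-incorporating its dependence on the $\Add(\tau,\nu)^W$-generic --- is the required condition.

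To arrange the strengthening we use that $\dM^\oplus(\tau,\mu,\nu,W)$ is a projection of $\Add(\tau,\nu)^W\times\dT(\dM^\oplus(\tau,\mu,\nu,W))$, with the termspace factor strongly $\mu$-strategically closed by Lemma~\ref{StratClosed}. Working in $V^{\Add(\tau,\nu)^W}$, Easton's lemma and the strategic closure of $\dT$ allow us to successively decide, on the termspace side, the domain of $\dot p_1$ (of size $<\tau$) and each of its values, and similarly the domain of $\dot q_1$ (of size $<\mu$) together with its name entries; the projection then pushes these decisions down to a genuine extension $(p_0',q_0')\leq(p_0,q_0)$ in $\dM^\oplus(\tau,\mu,\nu,W)$. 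I expect the main obstacle to be the bookkeeping needed to move fluently between the $\Add(\tau,\alpha)^W$-names native to $\dM^\oplus(\tau,\mu,\kappa,W)$ and the $\Add(\tau,\alpha\smallsetminus\nu)^W$-names appearing in the quotient, and to verify that the decisions made in the projecting product do translate into decisions in the quotient itself; once this is set up, the order-preservation and density verifications reduce to routine manipulations analogous to those for more standard variants of Mitchell forcing.
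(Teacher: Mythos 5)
Your embedding is literally the paper's: the paper's entire proof consists of the map $(p,q)\mapsto(p\uhr\nu,q\uhr\nu,\op(\check{p\uhr(\kappa\smallsetminus\nu)},\overline{q}))$ together with the remark that $\overline{q}$ reimagines $q\uhr(\kappa\smallsetminus\nu)$ as a name, so at the level of the stated proof you match it, and your density requirement $(p_0',q_0')\Vdash(\check p_1^*,\check q_1^*)\leq\dot r$ (extension, not equality) is the right target. But two steps of your verification sketch would fail as written. First, you cannot ``decide the domain of $\dot q_1$'': in the quotient the second coordinate is a $<\mu$-sized function in $V[G]$, and $\dM^\oplus(\tau,\mu,\nu,W)$ adds fresh $<\mu$-sized sets of ordinals (it collapses the interval $(\mu,\nu)$), so no condition forces $\dom(\dot q_1)$ to equal a check name. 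Relatedly, your claim that the projection ``pushes decisions down'' from $\Add(\tau,\nu)^W\times\dT$ to $\dM^\oplus(\tau,\mu,\nu,W)$ is valid only for names for members of ground-model sets (by a density argument using the projection property); for fresh objects it is false. What suffices, and is provable, is a \emph{covering} statement: since $V[G]\subseteq V[A][T]$ with $\dT$ remaining $<\mu$-distributive over $V[A]$ (Easton plus Lemma~\ref{StratClosed}), $\dom(\dot q_1)$ lies in $V[A]$, and the $\tau^+$-cc of $\Add(\tau,\nu)^W$ then yields a cover $d^*\in V$ of size $<\mu$; as $d^*$ ranges over the ground-model set $([\kappa]^{<\mu})^V$, some extension $(p_0',q_0')$ forces $\dom(\dot q_1)\subseteq\check d^*$, and that is all the ordering on $\dM^\oplus(G,\tau,\mu,\kappa\smallsetminus\nu,W)$ requires of the $V$-side domain.

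Second, your treatment of the entries is internally inconsistent: you ask for entries of $q_1^*$ that are $\Add(\tau,\alpha\smallsetminus\nu)^W$-names \emph{in $V$} and then propose to ``re-incorporate'' their dependence on the $\Add(\tau,\nu)^W$-generic. A name depending only on coordinates in $\alpha\smallsetminus\nu$ cannot in general be forced to end-extend $\dot q_1(\alpha)$, whose interpretation genuinely depends on $G$ (extension in $\dP(S)$ means end-extension, so the extending condition must ``know'' $\dot q_1(\alpha)$); and the glued condition must anyway have $q(\alpha)$ an $\Add(\tau,\alpha)^W$-name in $V$, as Definition~\ref{DefM3} demands. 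The correct move is to produce directly, by mixing over maximal antichains of $\dM^\oplus(\tau,\mu,\nu,W)$, an $\Add(\tau,\alpha)^W$-name $q^*(\alpha)\in V$ whose reinterpretation is forced to extend $\dot q_1(\alpha)$; this is possible because, writing $\alpha=\delta+1$, the same distributivity argument shows that any condition of the chain forcing appearing in the quotient already lies in $V[\Add(\tau,\delta)]$, and because the two chain forcings have literally the same underlying set: $[\delta]^{<\mu}\cap V[G][\Add(\tau,\delta\smallsetminus\nu)]=[\delta]^{<\mu}\cap V[\Add(\tau,\delta)]$, the coherence noted in Proposition~\ref{little-factors}, without which the ``reimagining'' in either direction does not typecheck. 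With the decision-versus-covering repair and the entries built as genuine $\Add(\tau,\alpha)^W$-names, your argument becomes the intended standard one.
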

	
	\begin{proof}
		As in other versions of Mitchell Forcing, we define
		$$(p,q)\mapsto(p\uhr\nu,q\uhr\nu,\op(\check{p\uhr(\kappa\smallsetminus\nu)},\overline{q}))$$
		where $\overline{q}$ reimagines $q\uhr(\kappa\smallsetminus\nu)$ as an $\dM^\oplus(\tau,\mu,\nu)$-name.
	\end{proof}
	
	Similarly, we have the following:
	
	\begin{mypro}\label{little-factors} Let $\tau<\mu<\nu<\kappa$ be cardinals such that $\tau^{<\tau}=\tau$ and $\nu,\kappa$ are inaccessible and let $G$ be $\dM^\oplus(\tau,\mu,\nu,W)$-generic over and let $H$ be $\dM^\oplus(G,\tau,\mu,\kappa \setminus \nu,W)$-generic over $V[G]$. Then there is a filter $K_A$ that is $\Add(\tau)$-generic over $V[G]$ and a filter $K_C$ that is $\dP([\nu]^{<\mu} \cap V[G])$-generic over $V[G][K_A]$ such that $V[G][H]$ is a forcing extension of $V[G][K_A][K_C]$.\end{mypro}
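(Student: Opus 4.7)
My approach is to exhibit a projection $\pi$ from the quotient forcing $\dM^\oplus(G,\tau,\mu,\kappa\smallsetminus\nu,W)$ onto the two-step iteration $\Add(\tau)^W * \dot{\dP}([\nu]^{<\mu}\cap V[G])$ corresponding to the ``first level'' of the quotient, at the ordinal $\nu+1$. Once such a projection is in hand, the existence of the filters $K_A$ and $K_C$ and the conclusion that $V[G][H]$ is a forcing extension of $V[G][K_A][K_C]$ (via the quotient by $K_A*K_C$) follow from general facts about projections of posets.

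\textbf{Defining the projection.} I would set
\[
\pi(p,q) \;=\; \bigl(p\uhr\{\nu\},\; q(\nu+1)\bigr),
\]
using trivial values when $\nu\notin\dom p$ or $\nu+1\notin\dom q$. The definition of the quotient ensures that $q(\nu+1)$ is an $\Add(\tau,\{\nu\})^W$-name for an element of $\dP([\nu]^{<\mu}\cap V[G])$, so the target is well-defined, and $\Add(\tau,\{\nu\})^W$ is isomorphic to $\Add(\tau)^W$. Order preservation is immediate from the definition of the quotient's ordering.

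\textbf{The projection property.} The nontrivial point is that every refinement in the target lifts: given $(p,q)$ and $(r,\dot{s})\le\pi(p,q)$ in $\Add(\tau)^W * \dot{\dP}([\nu]^{<\mu}\cap V[G])$, I would construct $(p',q')\le_{\dM^\oplus}(p,q)$ with $\pi(p',q')\le(r,\dot{s})$ by taking $p'$ to agree with $p$ off $\{\nu\}$ and to equal $r$ at $\nu$, and by replacing the value of $q$ at $\nu+1$ with a name that agrees with $\dot{s}$ below $r$ and with $q(\nu+1)$ off $r$. This is the same mixing-of-names technique that appears elsewhere in the paper (e.g.\ in the proof of Claim~\ref{DecisionByP} and in the proof that $\dM^\oplus$ is a projection of $\Add(\tau,\kappa)^W\times\dT$).

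\textbf{Extracting the generics and concluding.} Given $H$ generic for the quotient, I would let $K_A$ be the filter generated by $\{p\uhr\{\nu\}:(p,q)\in H\}$ and $K_C$ be the filter generated by $\{q(\nu+1)^{K_A}:(p,q)\in H,\;\nu+1\in\dom q\}$. By standard projection arguments, $K_A$ is $\Add(\tau)^W$-generic over $V[G]$ and $K_C$ is $\dP([\nu]^{<\mu}\cap V[G])$-generic over $V[G][K_A]$. Applying the standard intermediate-model theorem for projections yields that $V[G][H]$ is a forcing extension of $V[G][K_A][K_C]$, via the quotient forcing $\dM^\oplus(G,\tau,\mu,\kappa\smallsetminus\nu,W)/(K_A*K_C)$. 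The main obstacle is the verification of the projection property, specifically the lift across the second coordinate, while the remainder is standard bookkeeping and projection theory.
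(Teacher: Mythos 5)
Your proposal is correct and takes essentially the same route as the paper: the paper's (sketched) proof also isolates the block at coordinates $\nu$ and $\nu+1$ via a map as in Lemma~\ref{M3Decom} — this is precisely where it uses that domains consist of successors of inaccessibles, so that $\nu+1$ is a legal coordinate whose attached poset is an $\Add(\tau,\{\nu\})$-name for an element of $\dP([\nu]^{<\mu}\cap V[G])$. Whether one phrases the split as a dense embedding into (first block)$\,\ast\,$(tail), as the paper does, or as your projection onto the first block followed by the standard intermediate-model theorem, is only a repackaging of the same decomposition.
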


%\[
%\dM^\oplus(G,\tau,\mu,\kappa \setminus \nu) \simeq \dot{\Add}(\tau) \ast \dot{\dP}([\nu]^{<\mu} \cap V[\dM^\oplus(\tau,\mu,\kappa)][\Add(\tau)]) \ast \dot{\dQ}.
%\]

\begin{proof} Use a map similar to the one from the previous lemma. This is where we use the fact that $\dom(q)$ consists of ordinal successors of inaccessibles for $(p,q) \in \dM^\oplus(\tau,\mu,\kappa,W)$. Here we also note that $\dP([\nu]^{<\mu} \cap V[G]) = \dP([\nu]^{<\mu} \cap V[A])$ where $A$ is the $\Add(\tau,\nu)$-generic induced by $G$.\end{proof}
	
	In $V[G]$, $\dM^\oplus(G,\tau,\mu,\kappa\smallsetminus\nu,W)$ has similar properties to $\dM^\oplus(\tau,\mu,\kappa,W)$ using arguments similar to the ones we detailed:
	
	\begin{mylem}\label{M3DecProp}
		Let $\tau<\mu<\nu<\kappa$ be cardinals such that $\Add(\tau,\kappa)^W$ is $\mu$-Knaster and $\nu,\kappa$ are inaccessible. Let $G$ be $\dM^\oplus(\tau,\mu,\nu,W)$-generic. The following holds in $V[G]$:
		\begin{enumerate}
			\item $\dM^\oplus(G,\tau,\mu,\kappa\smallsetminus\nu,W)$ is $\kappa$-Knaster.
%			\item The base ordering on $\dM(G,\tau,\mu,\kappa\smallsetminus\nu)$ is $\tau^+$-Knaster.
			\item The term ordering on $\dM^\oplus(G,\tau,\mu,\kappa\smallsetminus\nu,W)$ is $\mu$-strongly strategically closed.
%			\item The ordering on $\dM(G,\tau,\mu,\kappa\smallsetminus\nu)$ is iteration-like.
		\end{enumerate}
	\end{mylem}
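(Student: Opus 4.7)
\bigskip

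\textbf{Proof plan for Lemma~\ref{M3DecProp}.} The plan is to obtain both items by adapting the corresponding arguments for the unfactored forcing $\dM^\oplus(\tau,\mu,\kappa,W)$ essentially verbatim, being careful about the model over which each piece of the forcing is defined. The sketch for Proposition (1) of the earlier proposition about $\dM^\oplus(\tau,\mu,\kappa,W)$ already told us that Knasterness follows from a $\Delta$-system argument, and Lemma~\ref{StratClosed} already supplies the template for the closure calculation; what has to be checked is that the same arguments go through in $V[G]$ with the outer parameters $\kappa\smallsetminus\nu$.

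For item (1), I would run the standard $\Delta$-system argument in $V[G]$. Given a family $\seq{(p_\xi,q_\xi)}{\xi<\kappa}$ of conditions in the quotient, first note that each $p_\xi\in\Add(\tau,\kappa\smallsetminus\nu)^W$ so its $W$-support is a bounded subset of $\kappa\smallsetminus\nu$ of size ${<}\,\tau$, and each $\dom(q_\xi)$ is a subset of $\kappa\smallsetminus\nu$ of size ${<}\,\mu$ consisting of successors of inaccessibles. Since $\kappa$ is inaccessible in $V$ and $\dM^\oplus(\tau,\mu,\nu,W)$ is $\nu$-cc with $\nu<\kappa$, $\kappa$ remains a regular cardinal in $V[G]$ with sufficient covering to thin down in the usual way: one refines to a stationary-in-$\kappa$ set on which the pairs $(\supp(p_\xi)\cup\dom(q_\xi))$ form a $\Delta$-system with a fixed root $r$, the restrictions $(p_\xi\uhr r,q_\xi\uhr r)$ are all equal, and any two $p_\xi,p_{\xi'}$ agree on $r$ hence are compatible in $\Add(\tau,\kappa\smallsetminus\nu)^W$. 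The usual coordinatewise union then produces a common lower bound, and one obtains $\kappa$-Knasterness.

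For item (2), I would mirror the proof of Lemma~\ref{StratClosed}. The term ordering on $\dM^\oplus(G,\tau,\mu,\kappa\smallsetminus\nu,W)$ is the product, indexed by coordinates $\alpha=\delta+1\in\kappa\smallsetminus\nu$ with $\delta$ inaccessible, of the direct-extension ordering $\le^*$ on $\Add(\tau)^W*\dP([\delta]^{<\mu}\cap V[G][\Add(\tau,\delta)^W])$. Since products of strongly $\mu$-strategically closed posets are strongly $\mu$-strategically closed, it suffices to win the game on each coordinate. For a fixed coordinate we run $\COM$'s tactic from the proof of Lemma~\ref{StratClosed} in the model $V[G]$: at a successor stage, after $\INC$ plays $\dot q_\beta$, $\COM$ collects the set $x_\gamma$ of all values $\epsilon$ such that some extension of $p$ forces $\epsilon\in\bigcup\dot q_\beta$, uses that $\Add(\tau)^W$ is $\mu$-Knaster in $V[G]$ (hence has the ${<}\,\mu$-covering property there) to see $|x_\gamma|<\mu$ and $x_\gamma\in V[G]$, then extends by appending the check-name $\check x_\gamma$ at a new top coordinate; at limit stages, unions of prior names and top values combine into a lower bound because the top values have been explicitly placed in $V[G]$.

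The only step that requires any thought is the verification in item (1) that $\Add(\tau,\kappa\smallsetminus\nu)^W$ retains enough chain condition and covering in $V[G]$ to drive the $\Delta$-system refinement, and in item (2) that the relevant Knaster/covering properties survive in $V[G]$; both reduce to the hypothesis that $\Add(\tau,\kappa)^W$ is $\mu$-Knaster in $V$ together with the $\nu$-cc of $\dM^\oplus(\tau,\mu,\nu,W)$ and the $\kappa$-Knasterness of $\Add(\tau,\kappa\smallsetminus\nu)^W$ already used in establishing the analogous properties of $\dM^\oplus(\tau,\mu,\kappa,W)$. Once these are in hand, the arguments transcribe with no further surprise.
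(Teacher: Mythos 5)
Your proposal is correct and takes the same route the paper intends: the paper states Lemma~\ref{M3DecProp} without proof, remarking only that it follows ``using arguments similar to the ones we detailed,'' namely the $\Delta$-system argument behind the $\kappa$-Knasterness of $\dM^\oplus(\tau,\mu,\kappa,W)$ and the proof of Lemma~\ref{StratClosed} for the term ordering, both rerun over $V[G]$ exactly as you do. One point worth making explicit in your item (1): to arrange that the restrictions $q_\xi\uhr r$ are literally equal on the root one must first replace each $q_\xi(\alpha)$ by a canonical nice name (using the chain condition of $\Add(\tau,\alpha)^W$ and the inaccessibility of $\kappa$ in $V[G]$ to bound the number of such names below $\kappa$), since the names themselves form a proper class and no thinning equalizes them otherwise --- this is the step hidden in your phrase ``thin down in the usual way,'' and it matters here because two conditions of the chain forcing $\dP(S)$ are compatible only when one end-extends the other, so compatibility of the $q$-parts on the root cannot be had more cheaply.
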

	
%	We note that property (C) holds because we chose to collapse not at the point $\delta$ for $\delta$ inaccessible but at $\delta+1$.
	
%	Applying Lemma \ref{ApproxProp}, we obtain:

It is crucial for us to obtain the approximation property for quotients, which we can obtain from trivial modifications of the proof of Lemma~\autoref{ApproxProp}.

	\begin{mylem}\label{quotient-approx}
		Let $\tau<\mu<\nu<\kappa$ be cardinals such that $\tau^{<\tau}=\tau$ and $\nu,\kappa$ are inaccessible. Let $G$ be $\dM^\oplus(\tau,\mu,\nu,W)$-generic. In $V[G]$, $\dM^\oplus(G,\tau,\mu,\kappa\smallsetminus\nu,W)$ has the $<\mu$-approximation property.
	\end{mylem}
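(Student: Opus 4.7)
The plan is to re-run the proof of Lemma \ref{ApproxProp} verbatim inside $V[G]$, with $\dM' := \dM^{\oplus}(G,\tau,\mu,\kappa\smallsetminus\nu,W)$ playing the role of $\dM$, $\dA' := \Add(\tau,\kappa\smallsetminus\nu)^W$ playing the role of $\dA$, and the analogous term forcing $\dT' := \dT(\dM')$ playing the role of $\dT$. The three ingredients that the original argument consumed were: (i) a mixing-of-names claim analogous to Claim \ref{DecisionByP}; (ii) strong $\mu$-strategic closure of the term forcing; and (iii) the $\mu$-chain condition on $\dA \times \dA$. The task reduces to checking that each of these continues to hold for $\dM'$ inside $V[G]$.

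Ingredient (i) is a matter of bookkeeping: the argument in Claim \ref{DecisionByP} only uses the iteration-like shape of $\dM^\oplus$, namely that term-forcing conditions can be patched below incompatible Cohen-coordinate conditions into a single common refinement. This mixing feature is inherited literally by $\dM'$ from its definition, so the claim transfers without change. Ingredient (ii) is furnished directly by Lemma \ref{M3DecProp}(2).

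Ingredient (iii) is the only nonroutine step and is where I expect the main obstacle: I need that $\dA' \times \dA'$ is $\mu$-cc (in fact $\mu$-Knaster) not in $W$ or $V$ but in $V[G]$. The route is to invoke the Cummings--Foreman covering fact already recorded in the paper. Since the extension $W \subseteq V$ has $\kappa$-covering and $\dM^{\oplus}(\tau,\mu,\nu,W)$ is a projection of $\Add(\tau,\nu)^W \times \dT(\dM^\oplus(\tau,\mu,\nu,W))$ whose second factor is $\mu$-strongly strategically closed, Easton's lemma together with the covering analysis shows that $W \subseteq V[G]$ still has enough covering that $\dA' = \Add(\tau,\kappa\smallsetminus\nu)^W$ remains $\mu$-Knaster in $V[G]$; the product $\dA' \times \dA'$ then inherits $\mu$-Knasterness.

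Granted (i)--(iii), the remainder of the proof runs verbatim. Assuming for contradiction a name $\dot f$ forced to have every $<\mu$-sized restriction in $V[G]$ but to lie outside $V[G]$, one inductively builds $(p_\gamma^0, p_\gamma^1, q_\gamma, y_\gamma)_{\gamma \in \Even}$ using $\COM$'s winning strategy on $\dT'$ at limits and odd stages, and splitting via (i) at even successors. The resulting set $\{(p_\gamma^0, p_\gamma^1) : \gamma \in \Even\}$ is a $\mu$-antichain in $\dA' \times \dA'$, contradicting (iii).
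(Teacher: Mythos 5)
Your proposal is correct and matches the paper's intended argument: the paper proves this lemma only by remarking that it follows from ``trivial modifications'' of the proof of Lemma~\ref{ApproxProp}, with Lemma~\ref{M3DecProp}(2) supplying the strong $\mu$-strategic closure of the term ordering and the mixing claim transferring verbatim, exactly as you organize it. The one place to state things slightly more carefully is your ingredient (iii): the relevant covering degree for the Cummings--Foreman fact is $\mu$ rather than $\kappa$ (one uses that $\mu$ is inaccessible in $W$, which is implicit in the standing hypothesis that $\Add(\tau,\kappa)^W$ is $\mu$-Knaster, together with the $\mu$-covering of $W\subseteq V[G]$, obtained because $\dM^\oplus(\tau,\mu,\nu,W)$ is a projection of a $\mu$-cc times strongly $\mu$-strategically closed product), but this is precisely the bookkeeping the paper leaves implicit.
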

	
%	\section{Distinguishing Internally Club and Approachable for a Single Cardinal}

\subsection{Distinguishing Internally Club and Approachable for a Single Cardinal}
	
	In this subsection we show that $\dM^\oplus(\tau,\mu,\kappa,W)$ forces $\ICNIA(\kappa,\mu)$ to hold at $\kappa=\mu^+$. Technically, the next theorem will become redundant after giving the proof of Lemma \ref{Extension}. However, the proof serves as a gentle introduction to these arguments.

	\begin{mydef}\cite{Harrington-Shelah1985} Let $K$ be a model of some fragment of $\textup{\textsf{ZFC}}$. We say that $M \prec K$ is \emph{rich} or \emph{rich with respect to $\kappa$} if the following hold: 

\begin{enumerate}
\item $\kappa \in M$;
\item $\bar{\kappa}:=M \cap \kappa \in \kappa$ and $\bar{\kappa}<\kappa$;
\item $\bar \kappa$ is an inaccessible cardinal in $K$;
\item The cardinality of $M$ is $\bar{\kappa}$;
\item $M$ is closed under $<\!\bar{\kappa}$-sequences.
\end{enumerate}
	 \end{mydef}
	 
It is easy to show that:	 
	 
	 \begin{myfact}\label{wegotrichmodels} If $\kappa$ is Mahlo and $K$ is a model of a sufficiently rich fragment of $\textup{\textup{ZFC}}$ with $\kappa+1 \subseteq K$, then for all $a \in [K]^{<\kappa}$, there is a model $M \prec K$ such that $a \subseteq M$ and $M$ is rich with respect to $\kappa$. 
	 \end{myfact}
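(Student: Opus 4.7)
The plan is to build a continuous elementary chain inside $K$ and locate the required rich submodel at a stage indexed by an inaccessible cardinal below $\kappa$, whose existence is guaranteed by the Mahloness of $\kappa$.

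Concretely, I would construct a continuous $\subseteq$-increasing chain $\seq{M_\alpha}{\alpha<\kappa}$ of elementary submodels of $K$, each of cardinality strictly less than $\kappa$, with $a,\kappa\in M_0$. At each successor step I would enlarge $M_\alpha$ to some $M_{\alpha+1}\prec K$ containing (i) the ordinal $\alpha+1$, (ii) the set $M_\alpha$ itself, and (iii) every ${<}\alpha$-sequence of elements of $M_\alpha$ that lies in $K$. Inaccessibility of $\kappa$ keeps $|M_{\alpha+1}|<\kappa$, since the closure has cardinality at most $|M_\alpha|^{|\alpha|}<\kappa$. Limits are handled by taking unions, so clause (i) forces $\beta\subseteq M_\beta$ for every $\beta<\kappa$.

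Now I would apply Mahloness. The map $\alpha\mapsto|M_\alpha|$ is monotone with all values below $\kappa$, so its set of closure points is a club $C$ in $\kappa$. Intersecting $C$ with the stationary set of inaccessibles $\bar\kappa<\kappa$ above $|a|$, pick such an inaccessible $\bar\kappa\in C$ and set $M:=M_{\bar\kappa}$. I would then verify richness clause by clause: $a\cup\{\kappa\}\subseteq M_0\subseteq M$; clause (i) gives $\bar\kappa\subseteq M$, hence $\bar\kappa\subseteq M\cap\kappa$; conversely, for every $\beta<\bar\kappa$ we have $|M_\beta|<\bar\kappa$, so $|M_\beta\cap\kappa|<\bar\kappa$, and the regularity of $\bar\kappa$ gives $\sup(M_\beta\cap\kappa)<\bar\kappa$, whence $M\cap\kappa=\bigcup_{\beta<\bar\kappa}(M_\beta\cap\kappa)=\bar\kappa$; consequently $|M|=\bar\kappa$; finally, any ${<}\bar\kappa$-sequence of elements of $M$ has its range contained in some $M_\beta$ with $\beta<\bar\kappa$ by the regularity of $\bar\kappa$, so by clause (iii) the sequence itself belongs to $M_{\beta+1}\subseteq M$.

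The main delicate point is ensuring that $M\cap\kappa$ equals the ordinal $\bar\kappa$ rather than an unbounded or gappy subset of $\bar\kappa$; this is why the construction must enforce clause (i) and why $\bar\kappa$ is chosen to be a closure point of the cardinality function, so that $\sup(M_\beta\cap\kappa)$ stays strictly below $\bar\kappa$ for each $\beta<\bar\kappa$. Everything else is routine elementary-chain bookkeeping, leaning on inaccessibility of $\kappa$ at every successor step to keep the cardinalities bounded.
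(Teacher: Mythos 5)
Your overall strategy (continuous elementary chain of small submodels, then reflect at an inaccessible below $\kappa$ supplied by Mahloness) is the standard one — the paper states this Fact without proof, so there is no official argument to diverge from — but there is a genuine gap at exactly the step you yourself flag as delicate. From $|M_\beta|<\bar\kappa$ you cannot conclude $\sup(M_\beta\cap\kappa)<\bar\kappa$: regularity of $\bar\kappa$ bounds the supremum of a small subset \emph{of $\bar\kappa$}, whereas $M_\beta\cap\kappa$ is only a small subset of $\kappa$, and nothing in your construction keeps its elements below $\bar\kappa$. Concretely, suppose some fixed ordinal $\delta<\kappa$ lies in $M_0$ (this is unavoidable in general, e.g.\ if $\delta\in a$). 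Then $\delta\in M_\beta\cap\kappa$ for every $\beta$, while closure points of the cardinality map $\alpha\mapsto|M_\alpha|$ — which is all your club $C$ records — exist cofinally below $\delta$; picking an inaccessible $\bar\kappa\in C$ with $\bar\kappa<\delta$ gives $\delta\in M\cap\kappa\smallsetminus\bar\kappa$, so $M\cap\kappa$ is a proper, non-transitive superset of $\bar\kappa$ and in particular $M\cap\kappa\notin\kappa$, violating clause (2) of richness. So the chosen club simply does not see the trace $M_\alpha\cap\kappa$ at all, and your verification of $M\cap\kappa=\bar\kappa$ fails.

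The repair is small and standard. Since $\kappa$ is regular and $|M_\alpha|<\kappa$, the ordinal $g(\alpha):=\sup(M_\alpha\cap\kappa)$ is below $\kappa$ for every $\alpha$, so $D:=\{\alpha<\kappa\;|\;\forall\beta<\alpha\,(g(\beta)<\alpha)\}=\{\alpha<\kappa\;|\;M_\alpha\cap\kappa\subseteq\alpha\}$ is club in $\kappa$ (using the continuity of the chain). Choosing the inaccessible $\bar\kappa$ in $C\cap D$ rather than in $C$ alone, clause (i) of your construction gives $\bar\kappa\subseteq M_{\bar\kappa}$ while membership in $D$ gives $M_{\bar\kappa}\cap\kappa\subseteq\bar\kappa$, whence $M\cap\kappa=\bar\kappa\in\kappa$ as required; your remaining verifications ($|M|=\bar\kappa$ from the closure points of the cardinality function, and closure under ${<}\bar\kappa$-sequences via regularity of $\bar\kappa$ together with your successor-step closure) then go through unchanged. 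One further point deserves a sentence: your successor step only adds sequences ``that lie in $K$,'' so to get genuine closure of $M$ under ${<}\bar\kappa$-sequences (clause (5) of the paper's definition) you should note that the intended $K$ (e.g.\ $K=H(\Theta)$ for regular $\Theta>\kappa$) satisfies ${}^{<\kappa}K\cap V\subseteq K$, so every relevant sequence is available in $K$ to be added; the cardinality bound $|M_\alpha|^{|\alpha|}<\kappa$ from inaccessibility is correct as you state it.
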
 
	
	\begin{mysen}\label{M3ForcesDist}
		$\dM^\oplus(\tau,\mu,\kappa)$ forces that there exist stationarily many $N\in[H(\kappa)]^{\leq\mu}$ such that $N$ is internally club but not internally approachable.
	\end{mysen}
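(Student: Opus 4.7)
The plan is to adapt the standard rich-submodel technique used by Krueger, leveraging the factorization properties of $\dM^\oplus$ developed earlier. To show stationarity, fix a name $\dot F$ for an arbitrary function on $[H(\kappa)^{V[G]}]^{<\omega}$; the goal is to produce $N \in [H(\kappa)^{V[G]}]^{\le\mu}$ closed under $F$, internally club but not internally approachable.

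First, in $V$ I would invoke Fact~\ref{wegotrichmodels} (implicitly assuming $\kappa$ is Mahlo in $V$) to choose a sufficiently large $\theta$ and $M \prec H(\theta)^V$ that is rich with respect to $\kappa$ and contains $\dM^\oplus$, $\dot F$, and any other relevant parameters. Setting $\bar\kappa := M\cap\kappa$, we have that $\bar\kappa$ is inaccessible in $V$, $|M|^V = \bar\kappa$, and $M$ is ${<}\bar\kappa$-closed. Next, I would factor $\dM^\oplus(\tau,\mu,\kappa) \cong \dM^\oplus(\tau,\mu,\bar\kappa) * \dot{\dR}$ via Lemma~\ref{M3Decom}, writing $G = G_1 * G_2$, and apply Proposition~\ref{little-factors} to extract from the second factor a Cohen-generic $K_A$ and a chain-generic $K_C = \seq{S_j}{j<\mu}$ (for the chain forcing $\dP([\bar\kappa]^{<\mu}\cap V[G_1][K_A])$) with $V[G_1][K_A][K_C] \subseteq V[G]$.

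In $V[G]$ I would then define $N := M[G_1][K_C] \cap H(\kappa)^{V[G]}$, the substructure obtained by interpreting names from $M$ using $G_1$ and the chain. Standard computations (size $\mu$, elementarity, membership in any prescribed ground-model-named club) follow from $|M|=\bar\kappa$ being collapsed to $\mu$, the closure of $M$ under ${<}\bar\kappa$-sequences in $V$, and the elementarity of $M$. Internal clubness is witnessed by the chain itself: by construction $\seq{S_j}{j<\mu} \subseteq N$, the chain is $\subseteq$-increasing and continuous, and since $\bigcup_j S_j = H(\bar\kappa)^{V[G_1][K_A]}$, once one verifies that elements of $N$ reduce (up to coding) to this union, cofinality in $[N]^{<\mu}$ is immediate.

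The main obstacle is showing that $N$ is not internally approachable. Suppose toward a contradiction that $\seq{N_i}{i<\mu}$ is an IA-witness in $V[G]$, so that the initial segments $f(i) := \seq{N_j}{j\le i}$ all lie in $N$. The plan is to invoke the ${<}\mu$-approximation property of the quotient $\dR$ (Lemma~\ref{quotient-approx}) to localize $\seq{N_i}{i<\mu}$ into a smaller model. The delicate point is that the initial segments lie in $N$, which contains $K_C$ and thus is not contained in $V[G_1][K_A]$; however, each $f(i)$ is the interpretation of a name in $M$ via $G_1$ and $K_C$, and using the mixing machinery developed for $\dM^\oplus$ together with the quotient approximation property, one aims to conclude that $\seq{N_i}{i<\mu}$ itself lies in $V[G_1][K_A]$. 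If that step succeeds, then $N = \bigcup_i N_i \in V[G_1][K_A]$, but the chain $K_C \in N$ is $\dP$-generic over $V[G_1][K_A]$ and hence new there, a contradiction. Making the localization argument precise --- properly mediating between the names in $M$, the two generics $G_1$ and $K_C$, and the approximation property --- is the technical heart of the proof.
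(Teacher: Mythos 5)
Your overall toolkit is the paper's (rich model for a Mahlo $\kappa$, the factorization of Lemma~\ref{M3Decom} and Proposition~\ref{little-factors}, the generic chain for internal clubness, the quotient approximation property of Lemma~\ref{quotient-approx} against approachability), but your definition of the candidate set $N:=M[G_1][K_C]\cap H(\kappa)^{V[G]}$ introduces a genuine gap, in two places. First, membership of $N$ in the prescribed club is not ``standard'' for this $N$ and in fact fails as an argument: $F=\dot{F}^G$ is interpreted by the \emph{full} generic, and closure under $F$ comes from the elementarity of $M[G]$ inside $H(\Theta)^{V[G]}$ (via the $\kappa$-cc), which is only available when the names in $M$ are interpreted by all of $G$. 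The pair $G_1\ast K_C$ is not generic for any forcing that belongs to $M$ (the chain forcing sits at coordinate $\bar\kappa+1$ and $\bar\kappa\notin M$), so no elementarity applies to $M[G_1][K_C]$, and there is no reason that $F(x)\in M[G_1][K_C]$ for $x\in N$ --- least of all for $x=K_C$, which you place in $N$ by fiat. The paper instead takes $N=M[G]\cap H(\kappa)^{V[G]}$, which is $F$-closed for free, and then \emph{proves} that the chain lands inside: since hereditarily small names in $M$ have supports below $\bar\kappa$, the Mostowski collapse $N[G']$ of $M[G]$ satisfies $[N[G']]^{<\mu}\cap V[G']\subseteq N[G']$ (a nice-name argument using the $\tau^+$-cc and the closure of the rich model), so each $S_j$, being a ${<}\mu$-sized member of $V[G']$, belongs to $N$. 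Note that internal clubness needs only the \emph{range} of the chain to lie in $N$; the sequence $K_C$ itself must \emph{not} be an element of $N$.

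Second, and fatally for the step you yourself flag as the ``technical heart'': putting $K_C$ into $N$ does not merely complicate the localization, it blocks it. To invoke the ${<}\mu$-approximation property you need all initial segments of the putative approaching sequence to lie in the ground model of the quotient. With the paper's $N$ this is automatic, because its collapse is literally a set of $G'$-interpretations, so $N[G']\subseteq V[G']$ and the initial segments $(\pi(N_i))_{i<j}\in N[G']\subseteq V[G']$; approximation then puts the whole sequence in $V[G']$, and the contradiction is a cardinality one --- $\nu$ would have size $\mu$ in $V[G']$, contradicting the $\nu$-cc of $\dM^\oplus(\tau,\mu,\nu)$ --- not the genericity of $K_C$. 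Your $N$ contains $K_C$ (which is fixed by the collapse, as its entries are small subsets of $\bar\kappa$), hence $N\not\subseteq V[G_1][K_A]$, so knowing the initial segments lie in $N$ gives you nothing toward the approximation hypothesis, and the chain of implications you aim for ($\seq{N_i}{i<\mu}\in V[G_1][K_A]$, hence $N\in V[G_1][K_A]$, hence $K_C\in V[G_1][K_A]$) cannot get started. The repair is exactly the paper's choice of $N$ from the full generic, followed by the transfer of both properties through $\pi$ (using $\pi(\mu)=\mu$ and $\otp(N_i)<\mu<\crit(\pi^{-1})$), at which point your remaining computations go through.
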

	
	\begin{proof}
		To aid in the legibility, we let $\dM^\oplus:=\dM^\oplus(\tau,\mu,\kappa)$.
		
		Let $\dot{C}$ be an $\dM^\oplus$-name for a club. Let $\dot{F}$ be an $\dM^\oplus$-name for a function $[H(\kappa)]^{<\omega}\to[H(\kappa)]^{<\kappa}$ such that the closure points of $\dot{F}$ are contained in $\dot{C}$. Let $\Theta$ be a cardinal such that $\dot{F}\in H(\Theta)$ and let $M'\prec H(\Theta)$ be rich with respect to $\kappa$ such that $\dot{F},\dM^\oplus,\tau,\mu,\kappa\in M'$, $\mu\subseteq M'$.

		Let $G$ be $\dM^\oplus$-generic and consider $M'[G]$: Since $\dot{F}^G\in M'[G]$, $M'[G]\cap H(\kappa)$ is closed under $\dot{F}^G$ and thus $M'[G]\cap H(\kappa)\in\dot{C}^G$. Furthermore, because $\dM^\oplus$ is $\kappa$-cc., $M'[G]\cap H(\kappa)=(M'\cap H(\kappa)^V)[G]=:M[G]$. We will show that $M[G]$ is as required.
		
		Let $\pi:M\to N$ be the Mostowski-Collapse of $M$. Because $\dM^\oplus$ is $\kappa$-cc., $M[G]\cap V=M$ and thus $\pi$ extends to $\pi:M[G]\to N[G']$, where $G':=\pi[G]$. Looking at the proof of Lemma \ref{M3Decom}, $G'$ is also equal to the $\dM^\oplus(\tau,\mu,\nu)$-generic filter induced by $G$ and there is an $\dM^\oplus(G',\tau,\mu,\kappa\smallsetminus\nu)$-generic filter (over $V[G']$) $G''$ such that $G=G'*G''$.

		\begin{myclan}
			$M[G]$ is internally club.
		\end{myclan}
		
		\begin{proof}
			We first show that $N[G']$ is internally club. We have $N[G']\subseteq V[G']$. Additionally, the reverse inclusion holds for many sets:
			\begin{mysclai}
				If $x\in [N[G']]^{<\mu}\cap V[G']$, $x\in N[G']$.
			\end{mysclai}
			\begin{proof}
				If $x\in [N[G']]^{<\mu}\cap V[G']$, $x$ has been added by $\Add(\tau,\nu)$. Let $\dot{x}$ be an $\Add(\tau,\nu)$-name for $x$. By the $\tau^+$-cc. of $\Add(\tau,\nu)$, we can assume that $\dot{x}$ is a ${<}\,\mu$-sized subset of $N$ (since $\dot{x}(\alpha)$ is an element of $N[G']$ for every $\alpha$). Then $\dot{x}\in N$ and thus $\dot{x}^{G'}\in N[G']$.
			\end{proof}
			\begin{mysclai}
				$N[G']$ is internally club.
			\end{mysclai}
			\begin{proof}
				By the previous claim, $[N[G']]^{<\mu}\cap V[G']=[N[G']]^{<\mu}\cap N[G']$. $\dM^\oplus(\nu+2)$ collapses $\nu$ by adding a continuous and cofinal sequence into $[\nu]^{<\mu}\cap V[\Add(\tau,\nu)]$ by Proposition~\autoref{little-factors}. This is isomorphic to $[N[G']]^{<\mu}\cap V[G']$ since $|N[G']|=|N|=\nu$. Hence, $\dM^\oplus(\nu+2)$ forces that we can write $N[G']=\bigcup_{i<\mu}N_i$ where $N_i\in [N[G']]^{<\mu}\cap V[G']=[N[G']]^{<\mu}\cap N[G']$ for every $i<\mu$.
			\end{proof}
			Since $\pi$ is an ``internal'' isomorphism of $M[G]$ and $N[G']$, $M$ is also internally club: Write $N[G']=\bigcup_{i<\mu}N_i$ such that $N_i\in [N[G']]^{<\mu}\cap N[G']$ for every $i$. Then $M[G]=\bigcup_{i<\mu}\pi^{-1}[N_i]$ and $\pi^{-1}[N_i]=\pi^{-1}(N_i)\in [M[G]]^{<\mu}\cap M[G]$ for every $i$ (since $\otp(N_i)<\mu<\crit(\pi^{-1})$).
		\end{proof}
		
		Thus we are finished after showing:
		
		\begin{myclan}
			$M[G]$ is not internally approachable.
		\end{myclan}
		\begin{proof}
			Again, we show the following first:
			\begin{mysclai}
				$N[G']$ is not internally approachable.
			\end{mysclai}
			\begin{proof}
				Assume toward a contradiction that $N[G']=\bigcup_{i<\mu}N_i$ such that for each $j$, $(N_i)_{i<j}\in N[G']$. In particular, $(N_i)_{i<j}\in V[G']$. Because $G=G'*G''$ and $G''$ is generic for an ordering with the $<\mu$-approximation property (\autoref{quotient-approx}), $(N_i)_{i<\mu}\in V[G']$. However, this implies that $N[G']$ has size $\mu$ in $V[G']$, a contradiction as $\dM^\oplus(\tau,\mu,\nu)$ is $\nu$-cc. and $|N[G']|=|N|=|\nu|$.
			\end{proof}
			Now assume $M[G]=\bigcup_{i<\mu}M_i$ such that for each $j<\mu$, $(M_i)_{i<j}\in M[G]$. Then $N[G']=\bigcup_{i<\mu}\pi[N_i]$ and for each $j<\mu$, $(\pi[N_i])_{i<j}=(\pi(N_i))_{i<j}=\pi((N_i)_{i<j})\in N[G']$, since $\pi(\mu)=\mu$, giving us a contradiction.
		\end{proof}
		Thus we have produced a set in $\dot{C}^G$ which is internally club but not internally approachable.
	\end{proof}
	
	\section{Distinguishing Internally Club and Approachable on an Infinite Interval}
	
	In this section we apply the previous results to obtain the distinction between internally club and approachable on the interval $[\aleph_2,\aleph_{\omega})$, thus obtaining our main theorem.
	
	\subsection{Preservation of the Distinction}
	
	First we do some preliminary work by establishing some conditions under which $\ICNIA(\Theta,\aleph_n)$ is preserved by sufficiently well-behaved forcings.

	To obtain the model for \autoref{omega-theorem}, we will make use of a projection analysis, showing that, for a given $n$, the distinction holds in an outer model of the target model. With this intention, we introduce a slight strengthening of $\ICNIA$ which is more easily preserved downwards.
	
%	\marginpar{\tiny H: Changed the definition a bit, the resulting property is more natural (and probably much harder to distinguish from $\ICNIA$). I think that $\ICNIA^+(\mu^+,\mu)$ is maybe equivalent to the existence of a disjoint stationary sequence without any cardinal arithmetic assumptions.}
	
	\begin{mydef}\label{plus-version}
		Let $\ICNIA^+(\Theta,\mu)$ be the statement that $\Theta\geq\mu^+$ and there exist stationarily many $N\in[H(\Theta)]^{\leq\mu}$ such that
		\begin{enumerate}
			\item $N$ is internally club.
			\item There is no sequence $(X_i)_{i<\mu}$ of elements of $[\Theta]^{<\mu}$ such that $\bigcup_{i<\mu}X_i=N\cap \Theta$ and $(X_i)_{i<j}\in N$ for all $j<\mu$.
		\end{enumerate}
		
		 We say that \emph{$N$ is not ordinal-internally approachable} if clause (2) holds.
	\end{mydef}
	
	We easily see that $\ICNIA^+(\Theta,\mu)$ implies $\ICNIA(\Theta,\mu)$: if $N$ is internally approachable, simply intersect the approaching sequence with the class of ordinals.
	
	\begin{mypro}\label{ClubApprDown}
		Assume $W$ is a forcing extension of $V$ by a forcing order $\dP$ which is ${<}\,\mu^+$-distributive. If $\ICNIA^+(\Theta,\mu)$ holds in $W$, $\ICNIA^+(\Theta,\mu)$ holds in $V$.
	\end{mypro}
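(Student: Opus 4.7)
The plan is to project from $W$ down to $V$ by intersecting with $H(\Theta)^V$, leaning on $<\mu^+$-distributivity to drop sequences into $V$. Given an arbitrary closure function $F\colon [H(\Theta)^V]^{<\omega} \to H(\Theta)^V$ in $V$, I work in $W$ and define $\bar F\colon [H(\Theta)^W]^{<\omega}\to H(\Theta)^W$ to extend $F$ and bundle in a restriction operation: set $\bar F(a) := F(a)$ when $a \subseteq H(\Theta)^V$, $\bar F(\{z\}) := z\cap H(\Theta)^V$ when $z \in H(\Theta)^W \setminus H(\Theta)^V$ is a set, and $\bar F(a) := \emptyset$ otherwise. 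This is well-defined into $H(\Theta)^W$ since $H(\Theta)^V \subseteq H(\Theta)^W$ and $z\cap H(\Theta)^V$ inherits hereditary size below $\Theta$ in $W$ from $z$. Applying $\ICNIA^+(\Theta,\mu)$ in $W$ to $\bar F$ produces $N^* \in [H(\Theta)^W]^{\leq\mu}$ closed under $\bar F$, internally club in $W$, and not ordinal-internally approachable in $W$. Set $N := N^* \cap H(\Theta)^V$; because $N$ is a $\leq\mu$-sized subset of $V$, $<\mu^+$-distributivity puts $N \in V$.

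Two of the verifications are straightforward. Closure of $N$ under $F$ follows from the first clause of $\bar F$: for $a\in[N]^{<\omega}$ we have $a\subseteq H(\Theta)^V$, so $F(a)=\bar F(a) \in N^* \cap H(\Theta)^V = N$, placing $N$ in the club defined by $F$ and handling stationarity. The failure of ordinal-internal approachability transfers downward: $N\cap\Theta = N^*\cap\Theta$, so any hypothetical witness $\seq{X_i}{i<\mu}\in V$ for $N$ has initial segments in $N \subseteq N^*$, and hence also witnesses ordinal-internal approachability of $N^*$ in $W$, contradicting the choice of $N^*$.

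Internal clubness of $N$ in $V$ is the crux of the argument, and it is the reason $\bar F$ needs its second clause. Using the equivalent formulation that $N$ is internally club iff there is a continuous $\subseteq$-increasing $\seq{N_\alpha}{\alpha<\mu}$ with $N_\alpha \in N$, $|N_\alpha|<\mu$, and $\bigcup_{\alpha<\mu} N_\alpha = N$, I fix such a chain $\seq{N^*_\alpha}{\alpha<\mu}\in W$ for $N^*$ and set $N_\alpha := N^*_\alpha \cap H(\Theta)^V$. The second clause of $\bar F$ makes $N^*$ closed under $z\mapsto z\cap H(\Theta)^V$, so $N_\alpha \in N^*$; and $N_\alpha\subseteq H(\Theta)^V$ of size $<\mu$ places $N_\alpha \in H(\Theta)^V$, hence $N_\alpha \in N$. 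Intersection commutes with unions, so the derived sequence is continuous, $\subseteq$-increasing, with union $N$, and size considerations (a strictly increasing chain in $\mathcal{P}(N)$ has length at most $|N|+1$) automatically force $|N|=\mu$. Since every $N_\alpha$ lies in $V$, $<\mu^+$-distributivity drops the whole sequence into $V$, yielding the desired witness there. The main obstacle I expect is precisely this step: without the second clause of $\bar F$, the naive restriction of a $W$-club witness to $[N]^{<\mu}$ would fail cofinality, because generic club elements are subsets of $N^*$ that do not descend into $N$. Bundling the restriction operation into $\bar F$ resolves this cleanly.
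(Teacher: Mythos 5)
Your argument is essentially correct, and it takes a genuinely different route from the paper's proof. The paper passes to a larger cardinal $\Theta'$, works with models $M\in[H(\Theta')^V]^{\mu}$ lying in the properness-style club $D'=\{M\;|\;M[G]\cap V=M\}$, and pushes these into $W$ via $M\mapsto M[G]\cap H(\Theta)^W$; the identity $M[G]\cap V=M$ is what guarantees that intersecting a $W$-witness for internal clubness with $V$ lands back inside the model. You obtain the same effect with no auxiliary $\Theta'$ and no submodel machinery: by bundling the restriction operation $z\mapsto z\cap H(\Theta)^V$ into the closure function, $\bar{F}$-closure of $N^*$ directly yields $N^*_\alpha\cap H(\Theta)^V\in N^*$, and ${<}\,\mu^+$-distributivity then pulls $N$ and the derived chain into $V$. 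The downward transfer of the failure of ordinal-internal approachability is the same in both proofs (the same sequence of sets of ordinals would witness approachability upward, using $N\cap\Theta=N^*\cap\Theta$), which is exactly what $\ICNIA^+$ was designed for. Your version is more elementary and more self-contained, at the cost of being tied to the closure-function presentation of clubs.

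Two repairs are needed, both routine. First, for uncountable $\mu$ the club filter on $[H(\Theta)]^{\leq\mu}$ is \emph{not} generated by closure-point sets of single-valued finitary functions $F:[H(\Theta)]^{<\omega}\to H(\Theta)$: for instance $\{x\;|\;\omega_1\subseteq x\}$ is club but contains no set of $F$-closed points, since the $F$-closure of a singleton is countable. So as written your argument only meets clubs of the special form $C_F$ and does not establish stationarity in $V$. The fix is to use Menas-style functions $F:[H(\Theta)]^{<\omega}\to[H(\Theta)]^{\leq\mu}$, as the paper itself does in the proof of Theorem~\ref{M3ForcesDist}; your definition of $\bar{F}$ and every subsequent step go through verbatim with $F(a)\subseteq N$ in place of $F(a)\in N$, and demanding $F(\emptyset)\supseteq\mu$ then gives $|N|=\mu$ for free. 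Second, your parenthetical justification that $|N|=\mu$ is wrong as stated: the derived chain $\seq{N_\alpha}{\alpha<\mu}$ need not be strictly increasing, so the bound on lengths of strictly increasing chains in $\mathcal{P}(N)$ does not apply. The conclusion nevertheless holds: if $|N|<\mu$, then by regularity of $\mu$ the increasing continuous chain with union $N$ would satisfy $N=N_{\alpha}$ for some $\alpha<\mu$, whence $N\in N$, contradicting foundation. (As in the paper's proof, you should also keep a regularity assumption on $\Theta$ in the background, so that ${<}\,\mu$-sized subsets of $H(\Theta)^V$ lying in $V$ are elements of $H(\Theta)^V$.)
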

	
	\begin{proof}
		In $V$, let $C$ be club in $[H(\Theta)^V]^{\leq\mu}$. Then in $W:=V[G]$, $C$ is club in $[H(\Theta)^V]^{\leq\mu}$ by the distributivity. Let $\Theta'$ be larger than $\Theta$ and at least so large that $\dP\in H(\Theta')$. We have the following statement whose form connects it to notions of properness:
		\begin{myclan}
			In $V[G]$, the set
			$$D':=\{M\in[H(\Theta')^V]^{\mu}\;|\;M[G]\cap V=M\}$$
			is club in $[H(\Theta')^V]^{\mu}$.
		\end{myclan}
		\begin{proof}
			For closure, notice that
			$$\left(\bigcup_{i<\mu}M_i\right)[G]\cap V=\left(\bigcup_{i<\mu}M_i[G]\right)\cap V=\bigcup_{i<\mu}(M_i[G]\cap V).$$
			For unboundedness, let $M_0\in[H(\Theta')^V]^{\mu}$ be arbitrary. Inductively define $M_{n+1}:=M_n\cup (M_n[G]\cap V)$. Then
			$$\left(\bigcup_{n\in\omega}M_n\right)[G]\cap V=\bigcup_{n\in\omega}M_n$$
			since, given some $\tau\in M_n$ with $\tau^G\in V$, $\tau^G\in M_n[G]\cap V=M_{n+1}$.
		\end{proof}
		Additionally, $C':=\{M\in[H(\Theta')^V]^{\mu}\;|\;M\cap H(\Theta)^V \in C\}$ is club in $[H(\Theta')^V]^{\mu}$. Thus
		$$E':=\{M[G]\;|\;M\in D'\cap C'\}$$
		is club in $[H(\Theta')^V[G]]^{\mu}$ which equals $[H(\Theta')^W]^{\mu}$ by the size of $\Theta'$. This implies that the set
		$$E:=\{M[G]\cap H(\Theta)^W\;|\;M[G]\in E'\}$$
		contains a club in $[H(\Theta)^W]^{\mu}$.
		
		Thus there exists $M\in D'\cap C'$ such that $M[G]\cap H(\Theta)^W$ is internally club but $\mu^+$ is not ordinal internally approachable in $M[G]\cap H(\Theta)^W$. We aim to show that, in $V$, $M\cap H(\Theta)^V$ is internally club but $\mu^+$ is not approachable in $M\cap H(\Theta)^V$.
		
		\begin{myclan}
			$M\cap H^V(\Theta)$ is internally club in the model $V$.
		\end{myclan}
		
		\begin{proof}
			We can write $M[G]\cap H^W(\Theta)=\bigcup_{i<\mu}M_i$, where the union is continuous and increasing and each $M_i$ is in $[M[G]\cap H(\Theta)]^{<\mu}\cap M[G]\cap H(\Theta)$. Because $M\in D'$, $M=M[G]\cap V$, so
\begin{align*}
M\cap H^V(\Theta)= & (M[G]\cap V)\cap H^W(\Theta)=(M[G]\cap H^W(\Theta))\cap V = \\ = & \bigcup_{i<\mu}M_i\cap V=\bigcup_{i<\mu}M_i\cap H^V(\Theta),
\end{align*}
using that $H^V(\Theta)=H(\Theta)\cap V$ as $\dP$ does not collapse cardinals. As $M[G]\cap V=M$, $M_i\cap H^V(\Theta')=M_i\cap H^V(\Theta)\in M[G]$ for every $i<\mu$. Additionally, $M_i\cap H^V(\Theta)$ is a subset of $H^V(\Theta)$ of size ${<}\,\mu$, so $M_i\cap H^V(\Theta)\in H^V(\Theta)$: $M_i\in V$ by distributivity and has hereditary size ${<}\,\Theta$. Again, as $M\in D$, $M_i\cap H^V(\Theta)\in M[G]\cap V=M$, so in summary $M_i\cap H^V(\Theta)\in M\cap H^V(\Theta)$.
		\end{proof}
		
		\begin{myclan}
			$M\cap H^V(\Theta)$ is not ordinal-internally approachable in the model $V$.
		\end{myclan}
		
		\begin{proof}
			Since $M[G]\cap V=M$, $(M[G]\cap H^W(\Theta))\cap\Theta=(M\cap H^V(\Theta))\cap\Theta$. Thus, if $M\cap H^V(\Theta)$ were ordinal-internally approachable in the model $V$, the same would be the case in the model $W$ (witnessed by the same sequence), a contradiction.
		\end{proof}
		Thus we have produced an element of $C$ which is as required.
	\end{proof}

	\subsection{Proving the Main Theorem}
	
	Now we will set up the proof of Theorem~\autoref{omega-theorem}. Let $(\kappa_n)_{n\in\omega}$ be a sequence of Mahlo cardinals. We force with the full support iteration $\dI = \seq{\dP_n}{n<\omega}$ where $\dP_0 = \dM^\oplus(\omega,\omega_1,\kappa_0,V)$ and given $\dP_n$ we let
	\[
	\dP_{n+1} = \dP_n \ast \dot{\dM}^\oplus(\kappa_{n-1},\kappa_0,\kappa_{n+1},V[\dP_{n-1}])
\]	
where $\kappa_{-2}=\omega$, $\kappa_{-1}=\omega_1$, and $V[\dP_{-1}]=V$ for simplicity. Observe that the iteration will turn $\kappa_n$ into $\aleph_{n+2}$ for all $0 \le n<\omega$.

We start with a small improvement of Theorem \ref{M3ForcesDist}:

		\begin{mylem}\label{Extension}
		Let $\tau<\mu<\kappa$ be cardinals such that $\tau^{<\tau}=\tau$, $\mu=\mu^{<\mu}$ and $\kappa$ is Mahlo. If $\gamma$ is any ordinal, $\dM^\oplus(\tau,\mu,\kappa,W)\times\Add(\mu,\gamma)$ forces $\ICNIA^+(\Theta,\mu)$ for all regular $\Theta \ge \kappa$.
	\end{mylem}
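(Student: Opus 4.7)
The plan is to follow the proof of Theorem~\ref{M3ForcesDist} closely, with modifications for three new features: the additional factor $\Add(\mu,\gamma)$, the arbitrary regular $\Theta\ge\kappa$, and the strengthening from $\ICNIA$ to $\ICNIA^+$. Fix regular $\Theta\ge\kappa$ and let $\dot C,\dot F$ be $(\dM^\oplus(\tau,\mu,\kappa,W)\times\Add(\mu,\gamma))$-names for a club in $[H(\Theta)]^{\le\mu}$ and a Skolem-type function whose closure points lie in $\dot C$. Choose $\Theta'>\Theta$ sufficiently large and let $M'\prec H(\Theta')$ be rich with respect to $\kappa$ containing the relevant parameters with $\mu\subseteq M'$; set $\nu:=M'\cap\kappa$ (inaccessible by Fact~\ref{wegotrichmodels}) and $M:=M'\cap H(\Theta)^V$, so that $\nu\subseteq M$. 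After forcing, the set $M^*:=M[G\times H]\cap H(\Theta)^{V[G\times H]}$ lies in $\dot C^{G\times H}$ by closure under $\dot F^{G\times H}$. The $\kappa$-cc of the product (which holds since $\dM^\oplus$ is $\kappa$-Knaster and $\Add(\mu,\gamma)$ is $\mu^+$-cc) yields $M[G\times H]\cap V=M$, and so the Mostowski collapse $\pi:M\to N$ extends to an isomorphism $\pi:M[G\times H]\to N[G_\nu][H']$, where $G_\nu$ is the $\dM^\oplus(\tau,\mu,\nu,W)$-generic induced from $G$ by Lemma~\ref{M3Decom}, and $H'$ is the $\Add(\mu,\pi(\gamma))$-generic induced from $H$.

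I verify both parts of $\ICNIA^+$ on $N[G_\nu][H']$ and transfer via $\pi$. For internally club: the $\tau^+$-cc of $\Add(\tau,\nu)^W$ yields $[N[G_\nu]]^{<\mu}\cap V[G_\nu]\subseteq N[G_\nu]$, and the $\mu$-closedness of $\Add(\mu,\pi(\gamma))$ in $V[G_\nu]$ ensures no new $<\mu$-sized sets appear, so $[N[G_\nu][H']]^{<\mu}\cap V[G_\nu][H']\subseteq N[G_\nu][H']$. Proposition~\ref{little-factors} then extracts the chain-forcing generic at $\nu+1$, whose associated continuous chain exhibits $N[G_\nu][H']$ as a union of elements of $[N[G_\nu][H']]^{<\mu}\cap N[G_\nu][H']$, exactly as in the original proof.

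For not ordinal-internally approachable: suppose $(X_i)_{i<\mu}$ witnesses it, with $X_i\in[\pi(\Theta)]^{<\mu}$, $\bigcup X_i=N[G_\nu][H']\cap\pi(\Theta)$, and $(X_i)_{i<j}\in N[G_\nu][H']$ for every $j<\mu$. Each initial segment is a $<\mu$-sized object in $V[G_\nu][H']$; the $<\mu$-distributivity of $\Add(\mu,\pi(\gamma))$ over $V[G_\nu]$ then forces $(X_i)_{i<j}\in V[G_\nu]$ for all $j<\mu$. Hence the full sequence lives in $V[G\times H]$ with all its $<\mu$-sized restrictions in $V[G_\nu]$, and by the $<\mu$-approximation property of $\dM^\oplus(G_\nu,\tau,\mu,\kappa\setminus\nu,W)\times\Add(\mu,\gamma)$ over $V[G_\nu]$ it itself lies in $V[G_\nu]$. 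Then $N\cap\pi(\Theta)=\bigcup X_i$ has cardinality $\le\mu$ in $V[G_\nu]$, contradicting $|N\cap\pi(\Theta)|\ge|\pi[\nu]|=\nu=\mu^+$ in $V[G_\nu]$ (using $\nu\subseteq M$, that $\pi$ is the identity on $\nu$, and that $\dM^\oplus(\tau,\mu,\nu,W)$ forces $\nu=\mu^+$).

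The main obstacle is verifying the $<\mu$-approximation property for the product over $V[G_\nu]$. I handle it in two layers: the $\mu$-closedness of $\Add(\mu,\gamma)$ gives the $<\mu$-approximation property for that factor trivially, and a routine adaptation of the proof of Lemma~\ref{ApproxProp} inside $V[G_\nu][H]$---using that $\Add(\tau,\kappa\setminus\nu)^W$ remains $\mu$-Knaster in $V[G_\nu][H]$ by Easton's lemma, and that the corresponding term forcing remains strongly $\mu$-strategically closed---shows that $\dM^\oplus(G_\nu,\tau,\mu,\kappa\setminus\nu,W)$ retains the $<\mu$-approximation property over $V[G_\nu][H]$. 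Composing the two steps yields the desired approximation property for the product.
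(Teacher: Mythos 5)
Your overall architecture (rich model, collapse $\pi:M\to N$, induced generics $G_\nu$ and $H'$, internal clubness via Proposition~\ref{little-factors} plus the $\tau^+$-cc name argument, non-approachability via an approximation property) matches the paper's strategy, and the internally-club half is essentially fine. But the non-approachability half rests on a false claim: you assert that ``the $\mu$-closedness of $\Add(\mu,\gamma)$ gives the $<\mu$-approximation property for that factor trivially,'' and you then compose to get the $<\mu$-approximation property of $\dM^\oplus(G_\nu,\tau,\mu,\kappa\smallsetminus\nu,W)\times\Add(\mu,\gamma)$ over $V[G_\nu]$. This is exactly backwards: a ${<}\,\mu$-closed forcing is the canonical \emph{failure} of the $<\mu$-approximation property. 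The $\Add(\mu,\gamma)$-generic function $F$ restricted to a single coordinate is a fresh subset of $\mu$: every ${<}\,\mu$-sized approximation $F\cap x$ with $x\in V[G_\nu]$ lies in $V[G_\nu]$ (by distributivity and covering), yet $F\notin V[G_\nu]$. So the product does \emph{not} have the $<\mu$-approximation property over $V[G_\nu]$, and your conclusion $(X_i)_{i<\mu}\in V[G_\nu]$ is not just unproved but unavailable in general --- the generically added approaching structure can absorb $H$-information, so one cannot hope to land the sequence below $H$.

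The repair is what the paper actually does, and your penultimate sentence already contains the correct ingredient: absorb $H$ into the base model and use the $<\mu$-approximation property of the quotient $\dM^\oplus(G_\nu,\tau,\mu,\kappa\smallsetminus\nu,W)$ \emph{alone} over $V[G_\nu][H]$ (the proof of Lemma~\ref{quotient-approx} still works there since $\Add(\mu,\gamma)$ is ${<}\,\mu$-distributive and does not change the definition of the quotient). The initial segments satisfy $(X_i)_{i<j}\in N[G_\nu][H']\subseteq V[G_\nu][H']\subseteq V[G_\nu][H]$ --- no need for your intermediate (and unnecessary) step pulling them into $V[G_\nu]$ --- and since $V[G][H]$ is an extension of $V[G_\nu][H]$ by the quotient, the whole sequence lies in $V[G_\nu][H]$. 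The cardinality contradiction then takes place in $V[G_\nu][H]$ rather than $V[G_\nu]$: there $\nu>\mu$ is still a cardinal because $G_\nu\times H$ is generic for a $\nu$-Knaster (hence $\nu$-cc) forcing, while the sequence would make $\nu$ a union of $\mu$ many sets of size ${<}\,\mu$. Two smaller inaccuracies in the same spirit: $\Add(\mu,\pi(\gamma))$ is \emph{not} ${<}\,\mu$-closed \emph{in} $V[G_\nu]$ (ground-model closure does not persist once $\dM^\oplus(\tau,\mu,\nu,W)$ has added new short sequences); what is true, and what you need, is its ${<}\,\mu$-distributivity over $V[G_\nu]$, obtained from the term-space/Easton analysis as in Lemma~\ref{StratClosed}. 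Finally, note that the paper runs the argument below a fixed condition $\tilde{q}$ and builds $G\times H$ from mutually generic pieces with $\pi_M(G\times H)=G'\times H'$; your version with induced generics from an arbitrary $G\times H$ can be made to work (since $\bar{\dA}=\Add(\mu,\gamma)\cap M$ is a genuine product factor and Lemma~\ref{M3Decom} makes $\bar{\dM}$ an initial factor), but the mutual-genericity facts you use implicitly, e.g.\ that $G''$ is generic over $V[G_\nu][H]$, deserve explicit justification.
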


%This will be sufficient for the proof of Theorem~\autoref{omega-theorem}:  Because of its cardinality $\dP_\textup{low}^n$, preserves the Mahloness of $\kappa_n$. Working in an extension by $\dP_\textup{low}^n$, we apply Lemma~\autoref{Extension}. Then Lemma~\autoref{distinction-pres} tells us that $\dP^n_\textup{high}$ preserves $\ICNIA(\Theta,\mu^+)$ upwards.
	
%	\setcounter{myclan}{0}
	
	\begin{proof}[Proof of Lemma~\ref{Extension}]
		We modify the proof of Theorem \ref{M3ForcesDist}.
		
		Define $\dQ:=\dM^\oplus(\tau,\mu,\kappa,W)\times\Add(\mu,\gamma)$. We will abbreviate this product as $\dM \times \dA_\textup{big}$. We will write the product that projects onto $\dM$ as $\dT \times \dA_\textup{small}$.
		
		 Let $\dot{C}$ be a $\dQ$-name for a club in $[H(\Theta)]^{\leq\mu}$ and $\dot{F}$ a name for the corresponding function. Let $\dot{X}$ be the $\dM \times \dA_\textup{big}$-name for $H(\Theta)^{V[\dM \times \dA_\textup{big}]}$. Suppose for contradiction that there is a condition $\tilde{q} \in \dQ$ forcing that $\dot{C}$ avoids the set of elements in $[H(\Theta)]^{\leq\mu}$ that are internally club and in which $\dot{X}$ is not ordinal internally approachable. (This formulation is necessary because we are using Mahlo embeddings.) Let $\Theta'>\Theta$ be such that $H(\Theta')$ contains $\dot{F}$ and choose a rich model $M\prec H(\Theta)$ with respect to $\kappa$ of cardinality $\nu$ such that $M$ contains $\tilde{q}$, $\dQ$ and $\dot{F}$.
		 
		 Let $\bar{\dM} = \dM \cap M = \pi_M(\dM)= \dM(\tau,\mu,\nu,W)$. Let $\bar{\mathbb{A}}_\textup{big} = \mathbb{A}_\textup{big} \cap M = \pi_M(\dM)$.
		 
		Now we will argue that we can choose the generics in a way that will suit us. Let $G' \times H'$ be a $\bar{\dM} \times \bar{\dA}_\textup{big}$-generic filter containing $\tilde{q}$. We can find $G''_1 \times G_2''$, a product of filters that are $\dT \times \mathbb{A}_\textup{small}$-generic over $V[G']$ such that $\dT$ induces a generic $G''_0$ for $\dM/G$ using Lemma~\ref{M3DecProp}. Now we let $G = G' \ast G''$. Since $\mathbb{A}_\textup{big} = \pi_M(\mathbb{A}_\textup{big}) \times \mathbb{A}'_\textup{big}$ where $\mathbb{A}'_\textup{big}$ is a remainder, there is $H''$ such that $H=H' \times H''$ is $\mathbb{A}_\textup{big}$-generic and $\pi_M(G \times H) = G' \times H'$. (This can be formulated in terms of $j_M$, the reverse of the Mostowski collapse $\pi_M$, and applying Silver's classical lifting criterion.)
		 
%		 		 		\begin{myfact}[Absorption Theorem, see \cite{Handbook-Cummings}] Suppose $\kappa$ is a regular cardinal and that $\dP$ is a separative and $\kappa$-strongly strategically closed poset such that $|\dP|<\lambda$. Then there is a complete embedding $\iota: \dP \to \operatorname{Col}(\kappa,<\lambda)$ such that if $G$ is $\dP$-generic over $V$, then $\operatorname{Col}(\kappa,<\lambda)$ is forcing-equivalent to $\operatorname{Col}(\kappa,<\lambda)/\iota(G)$. Moreover, this works if $\operatorname{Col}(\kappa,<\lambda)$ is replaced by $\operatorname{Col}(\kappa,A)$ where $\sup A = \lambda$.\end{myfact}
		  
		  %, noting that ${}^{<\nu}[M'\cap H(\kappa)]\subseteq M'$.
		
		We will argue that $(M \cap H(\Theta))[G][H]$ is internally club and $X:=\dot{X}^{G \times H}$ is not internally approachable in $(M\cap H(\Theta))[G][H]$ in the model $V[G][H]$. Since $(M \cap H(\Theta))[G][H]$ is closed under $\dot{F}^{G\times H}$, this suffices. We will argue using $N$, the image $\pi_M:M \to N$.

%		 choose $M'\prec H(\Theta)$ such that
%		\begin{enumerate}
%			\item $\nu:=|M'|=M'\cap\kappa\in\kappa$ is inaccessible
%			\item $\dot{F},\dQ,\tau,\mu,\nu,\kappa\in M'$, $\mu\subseteq M'$
%			\item ${}^{<\nu}[M'\cap H(\kappa)]\subseteq M'$
%		\end{enumerate}
		
%		Let $G$ be $\dQ$-generic over $V$ and consider $M'[G]$. As before, $M'[G]\cap H(\kappa)\in \dot{C}^G$. Write $G=H\times I\times J$ for corresponding generic filters. As before, $H':=\pi[H]$ is $\dM^\oplus(\tau,\mu,\nu)$-generic over $V$ and there is a $\dM^\oplus(H',\tau,\mu,\kappa\smallsetminus\nu)$-generic filter $H''$ such that $H=H'* H''$. Because $\dQ$ is $\kappa$-cc., $M'[G]\cap H(\kappa)=(M'[G]\cap H(\kappa)^V)[G]=:M[G]$. Let $\pi:M\longrightarrow N$ be the Mostowski-Collapse and extend it to $\pi:M[G]\longrightarrow N[G']$ with $G':=\pi[G]$.
		
		\begin{myclan}
			$(M \cap H(\Theta))[G][H]$ is internally club.
		\end{myclan}
		\begin{proof}
			This holds as in the proof of Theorem \ref{M3ForcesDist}: The Mostowski-Collapse of $(M\cap H(\Theta))[G][H]$ is equal to $\pi(H(\Theta))[G'][H']$ which is closed under ${<}\,\nu$-sequences in $V[G'][H']$. As before, $G\times H$ adds a club in $[\pi(H(\Theta))[G'][H']]^{<\mu}$ consisting of elements of $\pi(H(\Theta))[G'][H']$.
		\end{proof}
			
			% because every $<\mu$-sized subset has been added by a $\mu$-Knaster forcing.

		The slightly harder claim is:
		
		\begin{myclan}
			$(M\cap H(\Theta))[G][H]$ is not ordinal-internally approachable.
		\end{myclan}
		
		\begin{proof}
			Assume towards a contradiction that there is a sequence $(X_i)_{i<\mu}$ of elements of $[\Theta]^{<\mu}$ such that $(X_i)_{i<j}\in (M\cap H(\Theta))[G][H]$ for every $j<\mu$ and $\bigcup_{i<\mu}X_i=(M\cap H(\Theta))[G][H]\cap\Theta=\nu$. It follows that, for every $j<\mu$, $\pi((X_i)_{i<j})=(\pi[X_i])_{i<j}=(X_i)_{i<j}\in N[G'][H']\subseteq V[G'][H']\subseteq V[G'][H]$. However, $V[G][H]$ is an extension of $V[G'][H]$ by $\dM(G',\tau,\mu,\kappa\smallsetminus\nu)$ which has the ${<}\,\mu$-approximation property in $V[G'][H]$: one easily checks that the proof of Lemma~\autoref{quotient-approx} still works because $\Add(\mu,\gamma)$ is ${<}\,\mu$-distributive and therefore does not change the definition of $\dM(G',\tau,\mu,\kappa\smallsetminus\nu)$. Hence $(X_i)_{i<\mu}\in V[G'][H]$. This implies that $\Theta\geq\nu$ has size $\mu$ in $V[G'][H]$, a contradiction, as $G'\times H$ is generic for a $\nu$-Knaster forcing. 
		\end{proof}

		Again, we have produced $(M \cap H(\Theta))[G][H]\in\dot{C}^{G \times H}$ which is internally club but not internally approachable. This contradicts the choice of $\tilde{q}$.\end{proof}
	
%	Lastly, we finish by showing that $\ICNIA(\mu^+)$ is absolute between models with the same $\mu$-sequences.

%	\begin{mylem}
%		Let $\mu$ be a cardinal and $V\subseteq W$ models of set theory such that ${}^\mu V\subseteq V$ in $W$. If $\ICNIA(\Theta)$ holds in $W$, it holds in $V$.
%	\end{mylem}
%	
%	\begin{proof}	
%		Let $V\ni C\subseteq ([H(\mu^+)]^{\leq\mu})^V$ be a club. In $W$, $C$ is still club in $([H(\mu^+)]^{\leq\mu})^V=[H(\mu^+)]^{\leq\mu}$ because ${}^{\mu}V\subseteq V$. Thus there is $M\in C$ which is internally club but not internally approachable. Because $C\in V$, $M\in V$ and is still internally club there (the witnessing sequence is a $\mu$-sequence of elements of $V$ and thus in $V$) and not internally approachable (this is downwards absolute).
%	\end{proof}
	
%	Since $V[\dI]$ and $V[\dP_0^n\times\dP_1^n\times\dP_2^n]$ satisfy the hypotheses of the Lemma, $\ICNIA(\mu^+)$ holds in $V[\dI]$.

Now we can finish the proof of Theorem~\autoref{omega-theorem}: Let $n\in\omega$ be arbitrary.

		To obtain $\ICNIA(\Theta,\kappa_{n-1})$ we will view the iteration as a factorization $\dP_\textup{low}^n\ast \dot{\dP}_\textup{next}^n \ast \dot{\dP}_\textup{high}^n$, where
	
%		\dP_\textup{low} ^n & :=\prod_{0 \le k<n}\dM^\oplus(\kappa_{k-2},\kappa_{k-1},\kappa_k) \\

\begin{itemize}

\item $\dP_\textup{low}^n :=\dP_{n-1}$,

\item $\dot{\dP}_\textup{next}^n$ is a $\dP_{n-1}$-name for
\begin{align*}\dM^{\oplus}(\kappa_{n-2},\kappa_{n-1},\kappa_n,V[\dP_{n-2}])& \ast \\ \dot{\dM}^{\oplus}(\kappa_{n-1},\kappa_n, & \kappa_{n+1},V[\dP_{n-1}])\ast \\  & \dot{\dM}^{\oplus}(\kappa_n,\kappa_{n+1},\kappa_{n+2},V[\dP_n])\end{align*}

\item and $\dot{\dP}_\textup{high}^n$ is a $\dP_\textup{low}^n \ast \dot{\dP}_\textup{next}^n$-name for
\[
\seq{\dM^\oplus(\kappa_{k-2},\kappa_{k-1},\kappa_k,V[\dP_{k-2}])}{n+3 \le k<\omega}.
\]
\end{itemize}
	We want to show that $\dP$ forces $\ICNIA(\Theta,\kappa_{n-1})$. Let $G_\textup{low}$ be $\dP_\textup{low}^n$-generic over $V$ and work in $V[G_{\textup{low}}]$. Because $|\dP_\textup{low}^n|<\kappa_n$, $\kappa_n$ remains Mahlo in this model.

Now we need to perform a termspace argument. Here we will use the notation in which $A(\dP_1,\dot{\dP_2})$ is the termspace forcing in which $\dot{\dP_2}$ is the underying forcing and the ordering is taken with respect to what is forced by the empty condition of $\dP_1$ (see \cite[Section 22]{Handbook-Cummings}).

By standard termspace arguments, $\dP_\textup{next}^n\ast\dot{\dP}_\textup{high}^n$ is a projection of $\dP_\textup{next}^n\times A(\dP_\textup{next}^n,\dot{\dP}_\textup{high}^n)$. Since $\dP_\textup{next}^n$ forces $\dot{\dP}_\textup{high}^n$ to be ${<}\,\kappa_n$-strategically closed (using similar arguments to \cite{Cummings-Foreman1998}), $A(\dP_\textup{next}^n,\dot{\dP}_\textup{high}^n)$ is ${<}\,\kappa_n$-strategically closed. Now we focus on $\dP_\textup{next}^n$. Writing $\dM^{\oplus}(\tau,\mu,\kappa,W)$ as $\Add(\tau,\kappa)^W*\dT(\tau,\mu,\kappa)$, we have
\begin{align*}
	\dP_\textup{next}^n & =(\dM^{\oplus}(\kappa_{n-2},\kappa_{n-1},\kappa_n,V[\dP_{n-2}])\times\Add(\kappa_{n-1},\kappa_{n+1})) \\
	& \ast (\dT(\kappa_{n-1},\kappa_n,\kappa_{n+1})*\Add(\kappa_n,\kappa_{n+2})^{V[\dP_n]}*\dT(\kappa_n,\kappa_{n+1},\kappa_{n+2})).
\end{align*}

Let $$\dP_\textup{mid}^n:=\dM^{\oplus}(\kappa_{n-2},\kappa_{n-1},\kappa_n,V[\dP_{n-2}])\times\Add(\kappa_{n-1},\kappa_{n+1})$$
and
\begin{align*}
	\dT_\textup{next}^n & :=\dT(\dM^{\oplus}(\kappa_{n-1},\kappa_n,\kappa_{n+1}))\times A(\dM^{\oplus}(\kappa_{n-2},\kappa_{n-1},\kappa_n),\Add(\kappa_n,\kappa_{n+2})^{V[\dP_n]})\\
	& \times A(\dM^{\oplus}(\kappa_{n-2},\kappa_{n-1},\kappa_n)*\dM^{\oplus}(\kappa_{n-1},\kappa_n,\kappa_{n+1}),\dT(\dM^{\oplus}(\kappa_n,\kappa_{n+1},\kappa_{n+2}))),
\end{align*}
which is ${<}\,\kappa_n$-strategically closed.
Then $\dP_\textup{next}^n$ is easily seen to be a projection of $\dP_\textup{mid}^n\times\dT_\textup{next}^n$.

So in summary, $\dP_\textup{next}^n*\dP_\textup{high}^n$ is a projection of $\dP_\textup{mid}^n\times\dT_\textup{high}^n$, where
$$\dT_\textup{high}^n:=\dT_\textup{next}^n\times A(\dP_\textup{next}^n,\dP_\textup{high}^n).$$

We can consider any extension by $\dP_\textup{mid}^n\times\dT_\textup{high}^n$ as an extension first by $\dT_\textup{high}^n$ and then by $\dP_\textup{mid}^n$. In such an extension, $\ICNIA^+(\Theta,\kappa_{n-1})$ holds: $\dT_\textup{high}^n$ preserves the Mahloness of $\kappa_n$ by its strategic closure and does not add any new conditions to $\dP_\textup{mid}^n$. Ergo, by Lemma \ref{Extension}, $\dP_\textup{mid}^n$ forces $\ICNIA^+(\Theta,\kappa_{n-1})$. Furthermore, any ${<}\,\kappa_n$-sequence added by $\dP_\textup{mid}^n\times\dT_\textup{high}^n$ has been added by $\dP_\textup{mid}^n$, so $\ICNIA^+(\Theta,\kappa_{n-1})$ also holds in any extension by $\dP_\textup{next}^n*\dP_\textup{high}^n$ by Proposition~\autoref{ClubApprDown}.

%In $V[G_\textup{low}]$, the term ordering on $\dP_\textup{mid}^n*\dot{\dP}_\textup{high}^n$ is ${<}\,\kappa_n$-strategically closed, so we can project onto $\dP_\textup{mid}^n*\dot{\dP}_\textup{high}^n$ from the product of $\dP_\textup{mid}^n$ with a ${<}\,\kappa_n$-strategically closed poset $\dT_\textup{high}^n$. Thus, any extension by $\dP_\textup{mid}^n*\dot{\dP}_\textup{high}^n$ is contained in an extension by $\dP_\textup{mid}^n\times\dT_\textup{high}^n$. We can view the latter extension as an extension first by $\dT_\textup{high}^n$ and then by $\dP_\textup{mid}^n$. By the strategic closure of $\dT_\textup{high}^n$ (which does not add any new conditions satisfying the definition of $\dP_\textup{mid}^n$ and preserves the Mahloness of $\kappa_n$), in the extension by $\dT_\textup{high}^n$, $\dP_\textup{mid}^n$ forces $\ICNIA^+(\Theta,\kappa_n)$. In summary, the distinction holds after forcing with $\dP_\textup{low}^n*(\dot{\dP}_\textup{mid}^n\times\dot{\dT}_\textup{high}^n)$. 

%\marginpar{\tiny H: Removed that i used downwards absoluteness since we do that too.}

\begin{mybem} The first author obtained a proof of Theorem \autoref{omega-theorem} using a product rather than an iteration \cite{Jakob-thesis}.\end{mybem}

Here is a question related to the technical aspects of this paper:

\begin{myquest} Suppose $\dP$ is a $\nu^+$-closed forcing and $S \subseteq P_\nu(H(\Theta))$ is a stationary set of internally club sets. Is $S$ stationary in an extension by $\dP$?\end{myquest}

%We also note that in any model constructed for $\ICNIA(\Theta,\mu)$, one actually has that $\ICNIA^+(\Theta,\mu)$ holds as well. Thus we ask:
%\begin{myquest}
%	Does $\ICNIA(\Theta,\mu)$ imply $\ICNIA^+(\Theta,\mu)$?
%\end{myquest}

\bibliography{bibliography}
\bibliographystyle{plain}

\end{document}